\DeclareMathAlphabet{\mathbbb}{U}{bbold}{m}{n}
\theoremstyle{plain}
\newtheorem{Theorem}{Theorem}[section]
\newtheorem{Proposition}[Theorem]{Proposition}
\newtheorem{Corollary}[Theorem]{Corollary}
\newtheorem{Lemma}[Theorem]{Lemma}
\newtheorem{Conjecture}[Theorem]{Conjecture}
\theoremstyle{definition}
\newtheorem{Definition}[Theorem]{Definition}
\theoremstyle{remark}
\newtheorem{Remark}[Theorem]{Remark}
\newtheorem{Scholium}[Theorem]{Scholium}
\newtheorem{Example}[Theorem]{Example}
\newtheorem{substuff}{\bf Remark}[Theorem]
\newtheorem{subrem}[substuff]{\bf Remark} 
\DeclareMathAlphabet{\mathbbb}{U}{bbold}{m}{n}
\DeclareMathAlphabet{\mathmanual}{U}{manfnt}{m}{n}
\newcommand{\calA}{\mathcal{A}}
\newcommand{\calM}{\mathcal{M}}
\newcommand{\tens}{\otimes}
\newcommand{\Hom}{\mathrm{Hom}}
\newcommand{\DM}{\mathbf{DM}}
\newcommand{\Q}{\mathbb{Q}}
\newcommand{\isom}{\cong}
\def\ch{\mathop{\mathcal Ch}\nolimits}
\def\cch{\mathop{\mathcal CCh}\nolimits}
\def\ch-{\mathop{{\mathcal Ch}_-}\nolimits}
\def\cch-{\mathop{{\mathcal CCh}_-}\nolimits}
\def\ch+{\mathop{{\mathcal Ch}_+}\nolimits}
\def\cch+{\mathop{{\mathcal CCh}_+}\nolimits}
\def\det{\mathop{\rm det}\nolimits}
\def\ker#1{\mathop{{\rm Ker}(#1)}\nolimits}
\def\NN{{\mathbb N}}
\def\ZZ{{\mathbb Z}}
\def\QQ{{\mathbb Q}}
\def\fraz#1 #2 {\frac{#1}{#2}}
\def\h#1{{\kern .07em}^h\kern-.04em{#1}}
\def\a#1{{\kern .07em}{#1}_a}
\def\TTo#1{\mathop{\longrightarrow}\limits^{#1}}
\newcommand{\BD}{\begin{definition}}
\newcommand{\BDS}{\begin{definitions}}
\newcommand{\ED}{\end{definition}}
\newcommand{\EDS}{\end{definitions}}
\newcommand{\BR}{\begin{remark}}
\newcommand{\BRS}{\begin{remarks}}
\newcommand{\ER}{\end{remark}}
\newcommand{\ERS}{\end{remarks}}
\newcommand{\BN}{\begin{notation}}
\newcommand{\BNS}{\begin{notations}}
\newcommand{\EN}{\end{notation}}
\newcommand{\ENS}{\end{notations}}
\newcommand{\BE}{\begin{example}}
\newcommand{\BES}{\begin{examples}}
\newcommand{\EE}{\end{example}}
\newcommand{\EES}{\end{examples}}
\newcommand{\BT}{\begin{theorem}}
\newcommand{\ET}{\end{theorem}}
\newcommand{\BP}{\begin{proposition}}
\newcommand{\EP}{\end{proposition}}
\newcommand{\BC}{\begin{corollary}}
\newcommand{\EC}{\end{corollary}}
\newcommand{\BL}{\begin{lemma}}
\newcommand{\EL}{\end{lemma}}
\newcommand{\Bel}{\begin{description}}
\newcommand{\Eel}{\end{description}}
\def\ch{\mathop{\mathcal Ch}\nolimits}
\def\cch{\mathop{\mathcal CCh}\nolimits}
\def\ch-{\mathop{{\mathcal Ch}_-}\nolimits}
\def\cch-{\mathop{{\mathcal CCh}_-}\nolimits}
\def\ch+{\mathop{{\mathcal Ch}_+}\nolimits}
\def\cch+{\mathop{{\mathcal CCh}_+}\nolimits}
\def\det{\mathop{\rm det}\nolimits}
\def\ker#1{\mathop{{\rm Ker}(#1)}\nolimits}
\def\NN{{\mathbb N}}
\def\ZZ{{\mathbb Z}}
\def\QQ{{\mathbb Q}}
\DeclareMathAlphabet{\mathbbb}{U}{bbold}{m}{n}
\def\fraz#1 #2 {\frac{#1}{#2}}
\def\h#1{{\kern .07em}^h\kern-.04em{#1}}
\def\a#1{{\kern .07em}{#1}_a}
\def\TTo#1{\mathop{\longrightarrow}\limits^{#1}}
\newcommand\chose[2]{\genfrac{(}{)}{0pt}{}{#1}{#2}}
\newcommand{\Sym}{\mathrm{Sym}}
\newcommand{\Z}{\mathbb{Z}}
\newcommand\eff{\text{eff}}
\newcommand\Meff{\mathcal{M}^\eff} 
\begin{document}

\title{Schur-finiteness in $\lambda$-rings}
\date{\today}

\author{C. Mazza}
\address{DIMA -- Universit\`a di Genova, Genova, Italy}
\email{mazza@dima.unige.it} 
\author{C. Weibel}
\thanks{Weibel's research was supported by NSA and NSF grants.}
\address{Dept.\ of Mathematics, Rutgers University, New Brunswick,
NJ 08901, USA} \email{weibel@math.rutgers.edu}

\begin{abstract}
We introduce the notion of a Schur-finite element in a $\lambda$-ring.
\end{abstract}

\maketitle


Since the beginning of algebraic $K$-theory in \cite{G57},
the splitting principle has proven invaluable for working with
$\lambda$-operations. Unfortunately, this principle does not seem
to hold in some recent applications, such as the $K$-theory of motives.
The main goal of this paper is to introduce the subring of
Schur-finite elements of any $\lambda$-ring, 
and study its main properties, especially in connection with
the virtual splitting principle.

A rich source of examples comes from Heinloth's theorem \cite{Heinloth}, that 
the Grothendieck group $K_0(\calA)$ of an idempotent-complete $\QQ$-linear 
tensor category $\calA$ is a $\lambda$-ring. For the category
$\Meff$ of effective Chow motives, we show that $K_0(Var)\to K_0(\Meff)$
is not an injection, answering a question of Grothendieck.

When $\calA$ is the derived category of motives $\DM_{gm}$ over a field of
characteristic~0, the notion of Schur-finiteness in $K_0(\DM_{gm})$ 
is compatible with the 
notion of a Schur-finite object in $\DM_{gm}$, introduced in \cite{Mazza}.

We begin by briefly recalling the classical splitting principle in
Section~1, and answering Grothendieck's question in Section~2.
In section~3 we recall the Schur polynomials, the Jacobi-Trudi identities
and the Pieri rule from the theory of symmetric functions.
Finally, in Section~4, we define Schur-finite elements and show that
they form a subring of any $\lambda$-ring. We also state the
conjecture that every Schur-finite element is a virtual sum of line elements.

\subsection*{Notation}

We will use the term $\lambda$-ring in the sense of \cite[2.4]{SGA6};
we warn the reader that our $\lambda$-rings are called 
{\it special $\lambda$-rings} by Grothendieck, Atiyah 
and others; see \cite{G57} \cite{AT} \cite{Atiyah}.

A $\QQ$-linear category $\calA$ is a category in which each
hom-set is uniquely divisible (i.e., a $\QQ$-module). By a $\QQ$-linear
tensor category (or  $\QQ$TC) we mean a $\QQ$-linear category which is also
symmetric monoidal and such that the tensor product is $\QQ$-linear.
We will be interested in $\QQ$TC's which are idempotent-complete.

\newpage

\section{Finite-dimensional $\lambda$-rings}

Almost all $\lambda$-rings of historical interest are
finite-dimensional. This includes the complex representation rings
$R(G)$ and topological $K$-theory of compact spaces \cite[1.5]{AT}
as well as the algebraic $K$-theory of algebraic varieties \cite{G57}.
In this section we present this theory from the viewpoint we are adopting.
Little in this section is new.

Recall that an element $x$ in a $\lambda$-ring $R$ is said to be
{\it even} of finite degree $n$ if $\lambda_t(x)$ is a 
polynomial of degree $n$, or equivalently that there is a
$\lambda$-ring homomorphism from the ring $\Lambda_n$ defined in
\ref{free-even} to $R$, sending $a$ to $x$.
We say that $x$ is a {\it line element} if it is even of degree~1,
i.e., if $\lambda^n(x)=0$ for all $n>1$.

We say that $x$ is {\it odd} of degree $n$ if
$\sigma_t(x)=\lambda_{-t}(x)^{-1}$ is a polynomial of finite degree
$n$.  Since $\sigma_{-t}(x)=\lambda_t(-x)$, we see that $x$ is odd
just in case $-x$ is even. Therefore there is a $\lambda$-ring
homomorphism from the ring $\Lambda_{-n}$ defined in \ref{free-even}
to $R$ sending $b$ to $x$.

We say that an element $x$ is {\it finite-dimensional}
if it is the difference of two even elements, or equivalently if
$x$ is the sum of an even and an odd element. The subset of
even elements in $R$ is closed under addition and multiplication, and the
subset of finite-dimensional elements forms a subring of $R$.

\begin{Example}\label{binom}
If $R$ is a binomial $\lambda$-ring, then
$r$ is even if and only if some $r(r-1)\cdots(r-n)=0$, and odd if and only if some
$r(r+1)\cdots(r+n)=0$. The binomial rings $\prod_{i=1}^n\Z$ are
finite dimensional. If $R$ is connected then the 
subring of finite-dimensional elements is just $\ZZ$.
\end{Example}

There is a well known family of universal finite-dimensional $\lambda$-rings
$\{\Lambda_n\}$.

\begin{Definition}\label{free-even}
Following \cite{AT}, let $\Lambda_n$ denote the free $\lambda$-ring 
generated by one element
$a=a_1$ of finite degree $n$ (i.e., subject to the relations that 
$\lambda^k(a)=0$ for all $k>n$). By \cite[4.9]{SGA6}, $\Lambda_n$ is 
just the polynomial ring $\Z[a_1,...,a_n]$ with $a_i=\lambda^i(a_1)$. 

Similarly, we write $\Lambda_{-n}$ for the free $\lambda$-ring 
generated by one element $b=b_1$,
subject to the relations that $\sigma^k(b)=0$ for all $k>n$.
Using the antipode $S$, we see that there is a $\lambda$-ring
isomorphism $\Lambda_{-n}\cong\Lambda_n$ sending $b$ to $-a$,
and hence that $\Lambda_{-n}\cong\ZZ[b_1,...,b_n]$ with
$b_k=\sigma^k(b)$.

Consider finite-dimensional elements in $\lambda$-rings $R$ which
are the difference of an even element of degree $m$ and an odd element
of degree $n$. The maps $\Lambda_m\to R$ and $\Lambda_{-n}\to R$
induce a $\lambda$-ring map from $\Lambda_m\otimes\Lambda_{-n}$ to $R$.
\end{Definition}

\begin{Lemma}
If an element $x$ is both even and odd in a $\lambda$-ring,
then $x$ and all the $\lambda^i(x)$ are nilpotent.
Thus $\lambda_t(x)$ is a unit of $R[t]$.
\end{Lemma}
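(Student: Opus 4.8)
The plan is to derive everything from the single identity $\lambda_t(x)\,\sigma_{-t}(x)=1$, valid in $R[[t]]$ for any element $x$ of any $\lambda$-ring $R$: indeed by definition $\sigma_t(x)=\lambda_{-t}(x)^{-1}$, so $\lambda_{-t}(x)\,\sigma_t(x)=1$, and substituting $-t$ for $t$ gives the stated identity. Now suppose $x$ is even of degree $m$ and odd of degree $n$. By the definition of ``even'', $\lambda_t(x)=1+x\,t+\lambda^2(x)\,t^2+\cdots+\lambda^m(x)\,t^m$ lies in $R[t]$; by the definition of ``odd'', $\sigma_t(x)$ lies in $R[t]$, hence so does $\sigma_{-t}(x)$. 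Thus the identity above exhibits $\lambda_t(x)$ as a unit of the polynomial ring $R[t]$, with inverse $\sigma_{-t}(x)$ of degree $\le n$. This already proves the last assertion of the Lemma.

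Next I would invoke the standard description of the group of units of $R[t]$ for a commutative ring $R$: a polynomial $a_0+a_1 t+\cdots+a_m t^m$ is invertible in $R[t]$ if and only if $a_0$ is a unit of $R$ and $a_1,\dots,a_m$ are nilpotent in $R$. Applied to $\lambda_t(x)$, whose constant term is $1$ and whose coefficient of $t^i$ is $\lambda^i(x)$, this says precisely that $\lambda^i(x)$ is nilpotent for every $i\ge 1$; in particular, since $\lambda^1(x)=x$, the element $x$ itself is nilpotent, as are all the $\lambda^i(x)$.

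For completeness I would recall (or cite) the one–paragraph proof of the polynomial–unit lemma: if $f=\sum_{i\le m}a_i t^i$ and $g=\sum_{j\le n}b_j t^j$ satisfy $fg=1$, then comparing top coefficients gives $a_m b_n=0$, and an easy induction on $r$ shows $a_m^{\,r+1}b_{n-r}=0$; taking $r=n$ and using that $b_0=a_0^{-1}$ is a unit yields $a_m^{\,n+1}=0$, so $a_m t^m$ is nilpotent in $R[t]$, whence $f-a_m t^m$ is again a unit of strictly smaller degree and one finishes by induction on $\deg f$. I do not expect any genuine obstacle here: once the identity $\lambda_t(x)\,\sigma_{-t}(x)=1$ is in hand, the argument is a couple of lines, and the only input beyond the definitions is this elementary fact about polynomial rings.
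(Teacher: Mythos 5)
Your proof is correct and follows exactly the paper's argument: observe that $\lambda_t(x)$ and $\sigma_{-t}(x)$ are mutually inverse polynomials in $R[t]$, then apply the standard characterization of units of $R[t]$ to conclude that the coefficients $\lambda^i(x)$, $i\ge 1$, are nilpotent. The paper leaves the polynomial-unit lemma implicit; you have simply spelled it out.
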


\begin{proof}
If $x$ is even and odd then $\lambda_{t}(x)$ and $\sigma_{-t}(x)$
are polynomials in $R[t]$ which are inverse to each other.
It follows that the coefficients $\lambda^i(x)$ of the $t^i$
are nilpotent for all $i>0$.
\end{proof}

If $R$ is a graded $\lambda$-ring, an element $\sum r_i$ is even
(resp., odd, resp., finite-dimensional) if and only if each homogeneous
term $r_i$ is even (resp., odd, resp., finite-dimensional). This is
because the operations $\lambda^n$ multiply the degree of an element by $n$.


The forgetful functor from $\lambda$-rings to commutative rings
has a right adjoint; see \cite[pp.\,20--21]{Knutson}. It follows that
the category of $\lambda$-rings has all colimits. In particular,
if $B \leftarrow A \to C$ is a diagram of $\lambda$-rings, the tensor
product $B\otimes_AC$ has the structure of a $\lambda$-ring.
Here is a typical, classical application of this construction,
originally proven in \cite[6.1]{AT}.

\begin{Proposition}[Splitting Principle]\label{finite-splitting}
If $x$ is any even element of finite degree $n$ in a $\lambda$-ring $R$,
there exists an inclusion $R\subseteq R'$ of $\lambda$-rings and
line elements $\ell_1,...,\ell_n$ in $R'$ so that $x=\sum\ell_i$.
\end{Proposition}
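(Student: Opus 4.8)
The plan is to build $R'$ as an iterated tensor product of $\lambda$-rings over $R$, peeling off one line element at a time, exactly as in the classical proof of \cite[6.1]{AT}. The key observation is that the universal even element $a$ of degree $n$ in $\Lambda_n$ splits universally: since $\Lambda_n = \Z[a_1,\dots,a_n]$ with $a_i = \lambda^i(a)$, we may adjoin a root. Concretely, form the $\lambda$-ring $\Lambda_n' = \Lambda_n \otimes_{?} (\text{something})$ realizing $a = \ell + a'$ where $\ell$ is a line element and $a'$ is even of degree $n-1$. The standard device: consider the free $\lambda$-ring on a line element $\ell$ (this is $\Lambda_1 = \Z[\ell]$ with the line-element relations, i.e. $\lambda^k(\ell)=0$ for $k>1$), together with the free $\lambda$-ring $\Lambda_{n-1}$ on an even element $a'$ of degree $n-1$, and observe that in $\Lambda_1 \otimes \Lambda_{n-1}$ the element $\ell + a'$ is even of degree $\le n$; by the universal property this gives a $\lambda$-ring map $\Lambda_n \to \Lambda_1 \otimes \Lambda_{n-1}$.

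First I would check that this map $\Lambda_n \to \Lambda_1 \otimes \Lambda_{n-1}$ is injective (indeed split injective). This is the crucial algebraic input and is a statement purely about symmetric functions: writing $\lambda_t(\ell+a') = (1+\ell t)\lambda_t(a')$ and comparing coefficients expresses the $\lambda^i(a)$ in terms of $\ell$ and the $\lambda^j(a')$; one checks this realizes $\Lambda_1 \otimes \Lambda_{n-1} = \Z[\ell, a_1', \dots, a_{n-1}']$ as a free module over $\Lambda_n = \Z[a_1,\dots,a_n]$ (the transition being the passage from elementary symmetric polynomials to the ``one variable split off'' presentation — essentially the statement that $\Z[x_1,\dots,x_n]^{S_{n-1}}$ is free over $\Z[x_1,\dots,x_n]^{S_n}$). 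Granting injectivity, I then base-change: given the structure map $\Lambda_n \to R$ classifying $x$, set $R_1 = R \otimes_{\Lambda_n} (\Lambda_1 \otimes \Lambda_{n-1})$. Since the colimit / tensor product of $\lambda$-rings exists (as noted in the excerpt, using the right adjoint of the forgetful functor from \cite[pp.\,20--21]{Knutson}), $R_1$ is a $\lambda$-ring, the map $R \to R_1$ is injective because it is a base change of a split injection along a ring map, and in $R_1$ we have $x = \ell_1 + x_1$ with $\ell_1$ a line element and $x_1$ even of degree $n-1$.

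Then I would iterate: apply the same construction to $x_1 \in R_1$ to obtain $R \subseteq R_1 \subseteq R_2$ with $x_1 = \ell_2 + x_2$, and so on, stopping after $n$ steps when $x_{n-1}$ is even of degree $1$, hence itself a line element $\ell_n$. Setting $R' = R_n$ gives $x = \ell_1 + \dots + \ell_n$ with each $\ell_i$ a line element of $R'$ (line elements remain line elements under any $\lambda$-ring homomorphism, since the defining relations $\lambda^k(\ell)=0$ are preserved), and $R \subseteq R'$ since a composite of injections is injective.

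The main obstacle is the injectivity / flatness statement for $\Lambda_n \to \Lambda_1 \otimes \Lambda_{n-1}$, together with the verification that tensoring up along this map preserves injectivity after passing to an arbitrary $R$ — i.e. one really wants the map to be a split monomorphism of $\Lambda_n$-modules (equivalently of $\lambda$-rings in a suitable sense), not merely injective, so that $- \otimes_{\Lambda_n} R$ stays injective. Everything else — existence of the $\lambda$-ring tensor product, the bookkeeping of degrees via $\lambda_t(\ell + a') = (1+\ell t)\,\lambda_t(a')$, and the termination of the induction — is routine. I expect the cleanest route to the splitting of $\Lambda_n \to \Lambda_1 \otimes \Lambda_{n-1}$ is to exhibit an explicit $\Lambda_n$-basis of $\Lambda_1 \otimes \Lambda_{n-1}$, namely $1, \ell, \ell^2, \dots, \ell^{n-1}$, which one verifies directly from the identification of both rings with polynomial rings.
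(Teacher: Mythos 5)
Your proof is correct and is essentially the paper's argument: both reduce to the universal case and use that the relevant ring of invariants is free over $\Lambda_n$ (so that the $\lambda$-ring pushout along $\Lambda_n\to R$ contains $R$ as a split-off summand). The only difference is that the paper splits off all $n$ line elements at once, taking the pushout along $\Lambda_n\subseteq\Omega_n=\Lambda_1^{\otimes n}=\Z[\ell_1,\dots,\ell_n]$ (the full polynomial ring, free over the symmetric polynomials), whereas you iterate the rank-one case $\Lambda_n\to\Lambda_1\otimes\Lambda_{n-1}$ $n$ times; these rest on the same freeness fact and yield the same $R'$.
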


\begin{proof}
Let $\Omega_n$ denote the tensor product of $n$ copies of the $\lambda$-ring
$\Lambda_1=\ZZ[\ell]$; this is a $\lambda$-ring whose underlying ring is
the polynomial ring $\ZZ[\ell_1,...,\ell_n]$, and the $\lambda$-ring
$\Lambda_n$ of Definition \ref{free-even} is the subring of
symmetric polynomials in $\Omega_n$; see \cite[\S2]{AT}.
Let $R'$ be the pushout of the diagram $\Omega_n \leftarrow \Lambda_n\to R$.
Since the image of $x$ is $1\otimes x=a\otimes1=(\sum\ell_i)\otimes1$,
it suffices to show that $R\to R'$ is an injection. This follows from
the fact that $\Omega_n$ is free as a $\Lambda_n$-module. 
\end{proof}

\begin{Corollary}\label{line-decompose}
If $x$ is any finite-dimensional element of a $\lambda$-ring $R$,
there is an inclusion $R\subseteq R'$ of $\lambda$-rings and
line elements $\ell_i$, $\ell'_j$ in $R'$ so that
\[ x=(\sum\ell_i) - (\sum\ell'_j). \]
\end{Corollary}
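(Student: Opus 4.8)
The plan is to reduce the corollary to the Splitting Principle (Proposition \ref{finite-splitting}) applied twice, once to the even part and once to the odd part of $x$. By definition, a finite-dimensional element $x$ can be written as $x = u - v$, where $u$ is even of some finite degree $m$ and $v$ is even of some finite degree $n$; equivalently, $x = u + w$ with $u$ even of degree $m$ and $w = -v$ odd of degree $n$. I will work with the first presentation $x = u - v$.

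First I would apply Proposition \ref{finite-splitting} to the even element $u$ of degree $m$ in $R$: this produces an inclusion $R \subseteq R_1$ of $\lambda$-rings and line elements $\ell_1, \dots, \ell_m \in R_1$ with $u = \sum_i \ell_i$ in $R_1$. Next, observe that $v$, being even of degree $n$ in $R$, is still even of degree $n$ in $R_1$ (the $\lambda$-operations on $R_1$ restrict to those on $R$, so $\lambda_t(v)$ is the same polynomial of degree $n$). Now apply Proposition \ref{finite-splitting} again, this time to $v \in R_1$: this gives an inclusion $R_1 \subseteq R'$ of $\lambda$-rings and line elements $\ell'_1, \dots, \ell'_n \in R'$ with $v = \sum_j \ell'_j$ in $R'$. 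Composing, $R \subseteq R_1 \subseteq R'$ is an inclusion of $\lambda$-rings, the $\ell_i$ remain line elements in $R'$, and in $R'$ we have
\[
x = u - v = \Bigl(\sum_i \ell_i\Bigr) - \Bigl(\sum_j \ell'_j\Bigr),
\]
which is exactly the asserted decomposition.

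There is essentially no obstacle here: the only point requiring a word of care is that a line element stays a line element, and an even element of degree $n$ stays even of degree $n$, when one passes to a $\lambda$-subring extension $R \subseteq R'$ — but this is immediate from the fact that the inclusion is a $\lambda$-ring homomorphism, so it commutes with all $\lambda^i$ and hence with $\lambda_t$. One could alternatively phrase the whole argument in one step by forming the pushout of $\Omega_m \otimes \Omega_n \leftarrow \Lambda_m \otimes \Lambda_n \to R$ (using the map from Definition \ref{free-even}) and invoking freeness of $\Omega_m \otimes \Omega_n$ over $\Lambda_m \otimes \Lambda_n$ to get injectivity of $R \to R'$; but the two-step argument is cleaner and needs nothing beyond Proposition \ref{finite-splitting} as a black box.
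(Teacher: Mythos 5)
Your argument is correct and is exactly the intended deduction: the paper states this corollary without proof, treating it as an immediate consequence of Proposition \ref{finite-splitting} applied to each of the two even elements whose difference is $x$. Your two-step application (splitting $u$ over $R\subseteq R_1$, then $v$ over $R_1\subseteq R'$), together with the observation that evenness of degree $n$ and the line-element condition are preserved under $\lambda$-ring inclusions, fills in precisely the routine details the authors left implicit.
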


\begin{Scholium}\label{scholium}
For later use, we record an observation, whose proof is implicit in the 
proof of Proposition 4.2 of \cite{AT}: 
$\lambda^m(\lambda^n x)=P_{m,n}(\lambda^1 x,\dots,\lambda^{mn} x)$
is a sum of monomials, each containing a term $\lambda^i x$ for $i\ge n$.
For example, 
$\lambda^2(\lambda^3 x)=\lambda^6x-x\,\lambda^5x+\lambda^4x\,\lambda^2x$
(see \cite[p.\,11]{Knutson}).
\end{Scholium}

\section{$K_0$ of tensor categories}

The Grothendieck group of a $\QQ$-linear tensor category provides
numerous examples of $\lambda$-rings, and forms the original motivation
for introducing the notion of Schur-finite elements in a $\lambda$-ring.

A $\QQ$-linear tensor category is {\it exact} if it has a distinguished
family of sequences, called {\it short exact sequences} and satisfying the 
axioms of \cite{Quillen}, and such that each $A\otimes-$ is an exact functor.
In many applications $\calA$ is {\it split exact}: the only short exact
sequences are those which split.
By $K_0(\calA)$ we mean the Grothendieck group as an exact category,
i.e., the quotient of the free abelian group on the objects $[A]$ by
the relation that $[B]=[A]+[C]$ for every short exact sequence
$0\to A\to B\to C\to0$.

Let $\calA$ be an idempotent-complete exact category which is a 
$\QQ$TC for $\otimes$. 
%
For any object $A$ in $\calA$, the symmetric group
$\Sigma_n$ (and hence the group ring $\QQ[\Sigma_n]$) acts on the
$n$-fold tensor product $A^{\otimes n}$\!. If $\calA$ is idempotent-complete,
we define $\wedge^nA$ to be the direct summand of $A^{\otimes n}$ 
corresponding to the alternating idempotent 
$\sum (-1)^\sigma \sigma/n!$ of $\QQ[\Sigma_n]$.
Similarly, we can define the symmetric powers $\Sym^n(A)$.
It turns out that 
$\lambda^n(A)$ only depends upon the element $[A]$ in $K_0(\calA)$, and
that $\lambda^n$ extends to a well defined operation on $K_0(\calA)$.

The following result was proven by F.\,Heinloth in \cite[Lemma 4.1]{Heinloth}, 
but the result seems to have been in the air; see \cite[p.\,486]{Davidov},
\cite[5.1]{LL04} and \cite{B1, B2}.
A special case of this result was proven long ago by Swan in \cite{Swan}.

\begin{Theorem}\label{lambda-ring}
If $\calA$ is any idempotent-complete exact $\QQ$TC, 
$K_0(\calA)$ has the structure of a $\lambda$-ring. 
If $A$ is any object of $\calA$ then $\lambda^n([A])=[\wedge^nA]$.
\end{Theorem}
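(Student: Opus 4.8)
The plan is to verify the axioms of a (special) $\lambda$-ring on $K_0(\calA)$ directly, using the idempotent splittings of tensor powers. First I would check that the operations are well-defined: the assignment $[A]\mapsto [\wedge^n A]$ must be shown to factor through $K_0$, i.e.\ to be additive on short exact sequences. Because $\calA$ is idempotent-complete and $\QQ$-linear, the alternating idempotent $e_n^-=\frac1{n!}\sum_{\sigma\in\Sigma_n}\sgn(\sigma)\sigma$ in $\QQ[\Sigma_n]$ splits off $\wedge^n A$ from $A^{\otimes n}$, and likewise $\Sym^n A$ from the symmetric idempotent. The key observation is the classical decomposition: for a short exact sequence $0\to A\to B\to C\to 0$, the filtration on $B^{\otimes n}$ induced by the sub-object $A$ has associated graded pieces $\wedge^i A\otimes \wedge^{n-i}C$ (for the exterior powers), so in $K_0(\calA)$ one gets $[\wedge^n B]=\sum_{i=0}^n [\wedge^i A][\wedge^{n-i}C]$. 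In the split-exact case this is immediate from $B\cong A\oplus C$ and the binomial-type decomposition of $(A\oplus C)^{\otimes n}$ under $\Sigma_n$; in the general exact case one must check the filtration is compatible with the idempotents, which is where one invokes the cited argument of Heinloth/Swan. This gives the fundamental identity $\lambda_t([B])=\lambda_t([A])\lambda_t([C])$, hence $\lambda^n$ descends to a map on $K_0(\calA)$ with $\lambda_t(x+y)=\lambda_t(x)\lambda_t(y)$, $\lambda^0=1$, $\lambda^1=\mathrm{id}$.

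Having the additive structure, the next step is to promote this to a \emph{special} $\lambda$-ring structure, i.e.\ to verify the two remaining universal polynomial identities: one expressing $\lambda^n(xy)$ in terms of the $\lambda^i(x)$ and $\lambda^j(y)$ via the universal polynomials $P_n$, and one expressing $\lambda^m(\lambda^n(x))$ via $P_{m,n}$. Here I would use the standard reduction via the splitting principle for the \emph{universal} case. Consider the element $a=[A]$ in the free construction; but more concretely, since every element of $K_0(\calA)$ is a $\ZZ$-linear combination of classes $[A]$ and the identities to be checked are polynomial identities in the $\lambda$-operations, it suffices to verify them when $x$ and $y$ are classes of actual objects. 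For objects, $\wedge^n(A\otimes B)$ and $\wedge^m(\wedge^n A)$ decompose as direct sums of tensor products of Schur functors $\mathbb{S}_\mu(A)$, $\mathbb{S}_\nu(B)$ according to the plethysm/Littlewood--Richardson-type combinatorics of $\Sigma_n$-representations — and these decompositions, being statements in the representation theory of symmetric groups applied to the $\QQ$-linear idempotent-complete category $\calA$, exactly mirror the identities among symmetric functions that \emph{define} the polynomials $P_n$ and $P_{m,n}$. Concretely: the universal polynomials come from computing $\lambda$-operations in $\ZZ[x_1,\dots,x_r,y_1,\dots,y_s]$ where the $x_i,y_j$ are "line elements"; the representation-theoretic decomposition of $(A\otimes B)^{\otimes n}$ under $\Sigma_n$ produces precisely the same combinatorial coefficients, so the identity holds in $K_0(\calA)$.

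The main obstacle is the verification that the tensor-power decompositions behave correctly with respect to the exact (as opposed to merely additive/split) structure — that is, that the filtrations on $B^{\otimes n}$ are strictly compatible with the $\Sigma_n$-idempotents so that passing to $K_0$ yields the clean binomial identity. In the split-exact situation this is elementary, so I would first dispose of that case completely (it covers most applications, e.g.\ semisimple $\calA$), and then for the general exact case I would invoke the argument of Heinloth \cite[Lemma 4.1]{Heinloth}, noting that the needed input is: (i) $\otimes$ is exact in each variable, (ii) $\calA$ is idempotent-complete and $\QQ$-linear so that Young idempotents split, and (iii) the standard fact that a short exact sequence of objects induces, after applying $-^{\otimes n}$ and decomposing under $\Sigma_n$, a finite filtration with the expected associated graded. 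The remaining identities $P_n$ and $P_{m,n}$ then follow formally, because once $\lambda_t$ is exponential and the $\lambda$-operations on classes of objects match the representation-theoretic Schur/plethysm decompositions, the universal polynomial identities are inherited from their validity in the universal $\lambda$-ring $\Lambda_n$ (equivalently, in rings of symmetric functions), via the $\lambda$-ring maps $\Lambda_n\to K_0(\calA)$ classifying the even elements $[A]$. The final clause $\lambda^n([A])=[\wedge^n A]$ is then true by construction.
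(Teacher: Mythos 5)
First, a point of comparison: the paper does not actually prove this theorem --- it is quoted from Heinloth \cite[Lemma 4.1]{Heinloth} (with further references to \cite{Davidov}, \cite{LL04}, \cite{Swan}) --- so your proposal should be measured against that cited argument, which it essentially reconstructs. Your outline is the right one: additivity of $\lambda_t$ via the $\Sigma_n$-equivariant filtration on $B^{\otimes n}$ with associated graded $\wedge^iA\otimes\wedge^{n-i}C$, and then the special $\lambda$-ring identities via the Cauchy and plethysm decompositions of $\wedge^n(A\otimes B)$ and $\wedge^m(\wedge^nA)$ into Schur functors $S_\mu(A)\otimes S_{\mu'}(B)$, whose multiplicities are by construction the coefficients of the universal polynomials $P_n$ and $P_{m,n}$. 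You are also right to isolate the non-split case as the place where genuine work remains: Heinloth works in the pseudo-abelian, split-exact setting, so for a general exact $\QQ$TC one must check that the filtration on $B^{\otimes n}$ is by admissible subobjects and is compatible with the Young idempotents, using exactness of $\otimes$ in each variable.

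The one step whose justification is not valid as written is the reduction of the identities for $\lambda^n(xy)$ and $\lambda^m(\lambda^n x)$ from classes of objects to arbitrary elements of $K_0(\calA)$. The inference ``the identities are polynomial identities in the $\lambda$-operations and $K_0$ is generated by classes of objects, hence it suffices to check them on objects'' is not a valid deduction: a polynomial identity can hold on a set of additive generators of an abelian group without holding on the subgroup they generate. The standard repair (Grothendieck, SGA6 Exp.~V; Atiyah--Tall) is to observe that $1+tK_0(\calA)[[t]]$ carries the universal special $\lambda$-ring structure and that, once $\lambda_t$ is known to be additive, the set of elements (resp.\ pairs of elements) on which the remaining axioms hold is a subgroup --- multiplicativity because both sides of $\lambda_t(xy)=\lambda_t(x)\ast\lambda_t(y)$ are $\ZZ$-bilinear, and compatibility with the $\lambda$-operations because the target is itself special. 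With that substitution, and with the exactness bookkeeping delegated to \cite{Heinloth} and \cite{Swan} as you indicate, the argument is complete, and the final clause $\lambda^n([A])=[\wedge^nA]$ holds by construction.
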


Kimura \cite{Kimura} and O'Sullivan have introduced the notion
of an object $C$ being finite-dimensional in any $\QQ$TC $\calA$: 
$C$ is the direct sum of an even object $A$ (one for which some 
$\wedge^nA\cong0$) and an odd object $B$ (one for which some 
$\Sym^n(B)\cong0$). It is immediate that $[C]$ is a finite-dimensional
element in the $\lambda$-ring $K_0(\calA)$. Thus the two notions
of finite dimensionality are related.

\begin{Example}\label{effective}
Let $\Meff$ denote the category of $\Q$-linear 
pure effective Chow motives with respect
to rational equivalence over a field $k$. Its objects are summands
of smooth projective varieties over a field $k$ and morphisms are given by
Chow groups. Thus $K_0(\Meff)$ is the group generated by the classes of
objects, modulo the relation $[M_1\oplus M_2]=[M_1]+[M_2]$.
Since $\Meff$ is a $\QQ$TC, $K_0(\Meff)$ is a $\lambda$-ring.

By adjoining an inverse to the Lefschetz motive to $\Meff$, we obtain the
category $\calM$ of Chow motives (with respect to rational equivalence).
This is also a $\QQ$TC, so $K_0(\calM)$ is a $\lambda$-ring.

The category $\Meff$ embeds into the triangulated category $\DM^{\eff}_{gm}$
of effective geometric motives; see \cite[20.1]{MVW}. Similarly,
the category $\calM$ embeds in the triangulated category $\DM_{gm}$ of
geometric motives \cite[20.2]{MVW}. Bondarko proved in 
\cite[6.4.3]{Bondarko} that $K_0(\DM^\eff_{gm})\cong K_0(\Meff)$
and $K_0(\DM_{gm})\cong K_0(\calM)$. Thus we may investigate
$\lambda$-ring questions in these triangulated settings.
As far as we know, it is possible that every element of 
$K_0(\DM_{gm})$ is finite-dimensional.
\end{Example}

Recall that a motive $M$ in $\Meff$ is a \textbf{phantom motive} 
if $H^*(M)=0$ for every Weil cohomology $H$.

\begin{Proposition}\label{m0phantom}
Let $M$ be an object of $\Meff$. Then if $[M]=0$ in $K_0(\Meff)$, then
$M$ is a phantom motive.
\end{Proposition}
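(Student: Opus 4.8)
The plan is to exploit the fact that a Weil cohomology $H$ is a monoidal functor from $\Meff$ to the category of graded vector spaces over the coefficient field, and in particular sends direct sums to direct sums. Consequently $M \mapsto \sum_i \dim_K H^i(M)$ — or better, the class $[H^*(M)]$ in the Grothendieck group of graded $K$-vector spaces, which is just $\ZZ[u,u^{-1}]$ recording the graded dimensions — is additive on short exact (here, split) sequences, hence factors through a group homomorphism $\chi_H \colon K_0(\Meff) \to \ZZ[u, u^{-1}]$. (One should be slightly careful about whether $H^*$ is $\ZZ$-graded or merely $\ZZ$-graded up to the conventions of the chosen Weil cohomology; recording all graded pieces, rather than just the Euler characteristic, avoids any cancellation and is what we need.)

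First I would observe that if $[M] = 0$ in $K_0(\Meff)$, then $\chi_H([M]) = 0$ for every Weil cohomology $H$. Since the graded dimensions $\dim_K H^i(M)$ are non-negative integers and $\chi_H([M]) = \sum_i \dim_K H^i(M)\, u^i$ is the zero polynomial, each $\dim_K H^i(M)$ must vanish, i.e. $H^i(M) = 0$ for all $i$. As $H$ was arbitrary, $M$ is a phantom motive by definition. That is the whole argument.

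The only genuine point requiring care — and the step I expect to be the main obstacle, such as it is — is checking that $[H^*(M)]$ really is additive on the defining relations of $K_0(\Meff)$. Since $\Meff$ is a split exact category (every short exact sequence of Chow motives splits, as the only exact sequences are the split ones, morphisms being given by Chow groups), a short exact sequence is just a direct sum decomposition $M \cong M_1 \oplus M_2$, and any additive (i.e. $\QQ$-linear, hence finite-coproduct-preserving) functor $H$ carries this to $H^*(M) \cong H^*(M_1) \oplus H^*(M_2)$ degreewise; so additivity of graded dimensions is immediate. One should also note that $H$ is defined on all of $\Meff$ and not just on smooth projective varieties: this is standard, since $\Meff$ is the idempotent completion (pseudo-abelian envelope) of the category of correspondences, and a Weil cohomology, being additive and taking values in an idempotent-complete target, extends uniquely to summands. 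With that in hand, $\chi_H$ is a well-defined homomorphism and the proposition follows as above.
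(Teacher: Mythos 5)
Your argument is correct. You prove well-definedness of the graded Poincar\'e polynomial $\chi_H\colon K_0(\Meff)\to\ZZ[u,u^{-1}]$, $[M]\mapsto\sum_i \dim H^i(M)\,u^i$ (using that $K_0(\Meff)$ is presented by split direct-sum relations and that $H^*$ is additive), and then use non-negativity of the coefficients to conclude $H^i(M)=0$ for all $i$. The paper instead unwinds the relation $[M]=0$ directly: in an additive category this means $M\oplus N\isom N$ for some $N$, which one may take to be $M(X)$ since every effective motive is a summand of the motive of a smooth projective scheme; applying $H^i$ gives $H^i(M)\oplus H^i(X)\isom H^i(X)$, and cancellation in finite-dimensional vector spaces forces $H^i(M)=0$. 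The two routes rest on the same two facts (additivity of $H^*$ and finite-dimensionality of Weil cohomology), but yours packages them as a single homomorphism out of $K_0$ and so avoids invoking the stable-isomorphism characterization of vanishing classes, while the paper's cancellation argument is marginally more elementary in that it never needs the target Grothendieck group. Your cautionary remark about recording all graded dimensions rather than the Euler characteristic is exactly the right point to flag, since the alternating sum would permit cancellation and the conclusion would fail.
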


\begin{proof}
  Since $\Meff$ is an additive category, $[M]=0$ implies that
there is another object $N$ of $\Meff$ such that $M\oplus N\isom N$.
But every effective motive is a summand of the motive of a 
scheme, hence we may assume $N=M(X)$. If $M$ is not a phantom motive, there
is a Weil cohomology and an $i$ such that $H^i(M)\not=0$. But then
$H^i(M)\oplus H^i(X)\isom H^i(X)$; since these are finite-dimensional
vector spaces, this implies $H^i(M)=0$, a contradiction.
\end{proof}

Here is an application of these ideas. Recall that any quasi-projective 
scheme $X$ has a motive with compact supports in $\DM^\eff$, $M^c(X)$.
If $k$ has characteristic~0, this is an effective geometric motive,
and if $U$ is open in $X$ with complement $Z$ there is a triangle 
$M^c(Z)\to M^c(X)\to M^c(U)$; see \cite[16.15]{MVW}. It follows that
$[M^c(X)]=[M^c(U)]+[M^c(Z)]$ in $K_0(\Meff)$. 
(This was originally proven by Gillet and Soul\'e in \cite[Thm.\,4]{GS}
before the introduction of $\DM$, but see \cite[3.2.4]{GS}.
%

\begin{Definition}
Let $K_0(Var)$ be the Grothendieck ring of varieties obtained by
imposing the relation $[U]+[X\setminus U]=[X]$ for any open $U$ in
a variety $X$.  By the above remarks, there is a well defined 
ring homomorphism $K_0(Var)\to K_0(\Meff)$.
\end{Definition}
%

Grothendieck asked in \cite[p.174]{Gletter} if this morphism was far
from being an isomorphism. We can now answer his question.

\begin{Theorem}\label{bigthm}
The homomorphism $K_0(Var)\to K_0(\Meff)$ is not an injection.
\end{Theorem}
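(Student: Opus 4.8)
The plan is to exhibit an explicit element of $K_0(Var)$ which is nonzero there but maps to zero in $K_0(\Meff)$, using Proposition~\ref{m0phantom} to translate ``maps to zero'' into ``becomes a phantom motive.'' Concretely, I would look for two smooth projective varieties $X$ and $Y$ (or more generally varieties with a scissor decomposition) whose classes in $K_0(Var)$ differ, but whose effective Chow motives have isomorphic Weil cohomology in every degree for every Weil cohomology theory, so that the difference of their motives is a phantom. The cleanest source of such a phenomenon is a pair of varieties that are \emph{not} stably birational but have the same Hodge numbers / same $\ell$-adic Betti numbers and the same motive ``up to phantoms'' — for instance an example coming from the work on the Grothendieck ring of varieties, such as a pair of Calabi--Yau threefolds, or abelian varieties that are isogenous but not isomorphic. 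Since isogenous abelian varieties have isomorphic Chow motives with $\QQ$-coefficients, the subtlest point is to instead find varieties with the \emph{same} cohomology but distinct $K_0(Var)$-classes.

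The key steps, in order: first, recall that for a smooth projective $X$ the class $[M(X)]\in K_0(\Meff)$ is detected by $H^*$, so $[M(X)]=[M(Y)]$ once all Weil cohomology groups agree (by Proposition~\ref{m0phantom}, applied to $M$ a summand realizing the difference, once one checks the difference is represented by an honest effective motive — which it is, since $\Meff$ is idempotent-complete and the relevant motive is a summand of $M(X)\oplus M(Y)$ or of $M(X\times\PP^N)$). Second, produce the variety-level input: find $X$, $Y$ with $[X]\neq[Y]$ in $K_0(Var)$ but with matching Hodge structures in all degrees; the standard trick is to use the fact that $K_0(Var)$ is ``larger'' than cohomological invariants can see — e.g.\ exploiting that $\mathbb{L}=[\A^1]$ is a zero-divisor in $K_0(Var)$ (Borisov), or using that a K3 surface and an abelian surface with the same Betti numbers give distinct classes. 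Third, verify the chosen pair genuinely has $[X]-[Y]\neq 0$ in $K_0(Var)$; this is typically done via a motivic measure (point-counting over finite fields, or the Hodge--Deligne $E$-polynomial refined by a finer invariant) that distinguishes them while still being compatible with the cohomological comparison above being a ``phantom'' at the level of Chow motives.

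The main obstacle is the second step: one needs varieties whose \emph{full} collection of Weil cohomologies coincide — not just Betti numbers but the Galois/Hodge structures and the $\ell$-adic realizations for all $\ell$ — yet whose $K_0(Var)$ classes differ. Simply matching Betti numbers is not enough, because the phantom criterion in Proposition~\ref{m0phantom} requires $H^i(M)=0$ for \emph{every} Weil cohomology, which forces the motives (not just their Betti realizations) to agree. The resolution is to use a genuinely motivic coincidence: take $X$ and $Y$ to have \emph{isomorphic Chow motives} $M(X)\cong M(Y)$ in $\Meff$ — so that the difference motive is literally $0$, a fortiori a phantom — while $[X]\neq[Y]$ in $K_0(Var)$. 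Such pairs are known: for example, Borisov's non-cancellation example, or a pair of non-isomorphic but derived-equivalent Calabi--Yau threefolds (Pfaffian--Grassmannian type) which have isomorphic Chow motives yet are shown to have distinct classes in $K_0(Var)$ by a motivic measure. Carrying out the argument then amounts to (i) citing that such a pair has $M(X)\cong M(Y)$, hence $[M(X)]=[M(Y)]$ trivially, and (ii) citing the motivic-measure computation showing $[X]\neq[Y]$, so $[X]-[Y]$ is a nonzero element of $K_0(Var)$ killed by the map to $K_0(\Meff)$, proving non-injectivity.
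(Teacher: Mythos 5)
Your strategy is genuinely different from the paper's, and as written it has a real gap. The paper never exhibits an explicit element of the kernel: it argues indirectly via zeta functions. For $X=C\times D$ a product of two positive-genus curves, Larsen and Lunts show that $\zeta_t(X)=\sum[S^nX]t^n$ is \emph{not} determinantally rational over $K_0(Var)$, while Kimura's finite-dimensionality of $X$ in $\Meff$ forces its image $\sigma_t([M^c(X)])$ to \emph{be} determinantally rational over $K_0(\Meff)$; since determinantal non-rationality (vanishing of Hankel determinants) is inherited by any overring of an injective image, the map cannot be injective. Your plan instead is to produce smooth projective $X,Y$ with $M(X)\cong M(Y)$ in $\Meff$ but $[X]\neq[Y]$ in $K_0(Var)$. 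That skeleton is logically sound (and note that once you demand $M(X)\cong M(Y)$ you do not need Proposition~\ref{m0phantom} at all; your earlier framing uses its \emph{converse} --- ``same Weil cohomology $\Rightarrow$ same class in $K_0$'' --- which is neither stated nor true in the paper, since \ref{m0phantom} only gives $[M]=0\Rightarrow$ phantom).

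The gap is in the inputs you cite. It is not an established fact that the Pfaffian--Grassmannian Calabi--Yau pairs, or the varieties in Borisov's non-cancellation example, have isomorphic Chow motives; what is known there is derived equivalence and an $\mathbb{L}$-torsion relation between the classes, and ``D-equivalence implies isomorphic Chow motives'' is open in that generality. So step (i) of your argument rests on an unproved claim. Ironically, the example you mention and then set aside is the one that works: two isogenous, non-isomorphic abelian varieties (e.g.\ elliptic curves over $\CC$) have isomorphic $\QQ$-linear Chow motives by Deninger--Murre/K\"unnemann ($h(A)=\bigoplus\wedge^i h^1(A)$ and $h^1$ is an isogeny invariant), while $[E]\neq[E']$ in $K_0(Var)$ because, by Larsen--Lunts, $K_0(Var)/(\mathbb{L})$ is the free abelian group on stable birational classes and non-isomorphic elliptic curves are not stably birational (the Albanese is a stable birational invariant). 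If you commit to that pair and supply the Larsen--Lunts quotient argument in place of the vague ``motivic measure,'' you get a correct proof --- and a stronger statement than the paper's, since it produces an explicit nonzero kernel element rather than a pure non-injectivity argument.
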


\begin{subrem}
After this paper was posted in 2010, we were informed by J. Sebag that
Grothendieck's question had also been answered in \cite[Remark 14]{LiuSebag}.
\end{subrem}

For the proof, we need to introduce Kapranov's zeta-function.
If $X$ is any quasi-projective variety, its symmetric power $S^nX$ is the
quotient of $X^n$ by the action of the symmetric group. We define
$\zeta_{\,t}(X)=\sum [S^nX]t^n$ as a power series with coefficients in
$K_0(Var)$. 

\begin{Lemma} (\cite{Guletskii})
The following diagram is commutative:
\begin{diagram}[height=0.8cm]
  K_0(Var)&\rTo^{\zeta_{\,t}}&1+K_0(Var)[[t]]\\
\dTo<{M^c}&&\dTo>{M^c}\\
  K_0(\Meff)&\rTo^{\sigma_t}&1+K_0(\Meff)[[t]].
\end{diagram}
\end{Lemma}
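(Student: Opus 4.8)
The plan is to recognize both composites around the square as group homomorphisms from $K_0(Var)$ to the multiplicative group $1+K_0(\Meff)[[t]]$, and then to check that they agree on the classes $[X]$ of smooth projective varieties, which generate $K_0(Var)$ in characteristic~$0$.

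First I would make sure all four maps in the square are what they claim to be. On the left, $\zeta_t$ is well defined on $K_0(Var)$ and turns sums into products: if $U$ is open in a variety $X$ with closed complement $Z$, then $S^nX$ is stratified by the locally closed subsets of cycles whose part supported on $Z$ has degree $b$, and the $b$-th stratum is isomorphic to $S^{n-b}U\times S^bZ$; hence $[S^nX]=\sum_{b=0}^n[S^{n-b}U]\,[S^bZ]$ in $K_0(Var)$, i.e. $\zeta_t(X)=\zeta_t(U)\,\zeta_t(Z)$, so $\zeta_t$ extends uniquely to a homomorphism $(K_0(Var),+)\to(1+K_0(Var)[[t]],\cdot)$. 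On the bottom, $K_0(\Meff)$ is a $\lambda$-ring by Theorem~\ref{lambda-ring}, so $\sigma_t(x)=\lambda_{-t}(x)^{-1}$ is defined and is a homomorphism of the same kind. The vertical map $M^c\colon K_0(Var)\to K_0(\Meff)$ is additive by the localization triangle $M^c(Z)\to M^c(X)\to M^c(U)$ already used above, and therefore induces a homomorphism of multiplicative power-series groups on the right. Consequently both $M^c\circ\zeta_t$ and $\sigma_t\circ M^c$ are group homomorphisms $K_0(Var)\to 1+K_0(\Meff)[[t]]$, so commutativity of the square amounts to the equality $M^c(\zeta_t(X))=\sigma_t(M^c(X))$ for $X$ in a generating set of $K_0(Var)$.

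Next I would invoke resolution of singularities: in characteristic~$0$ the classes of smooth projective varieties generate $K_0(Var)$ as an abelian group, so it suffices to verify, for $X$ smooth projective, that $[M^c(S^nX)]=[\Sym^n M^c(X)]$ in $K_0(\Meff)$ for every $n$ (using the symmetric-power companion $\sigma^n([A])=[\Sym^nA]$ of Theorem~\ref{lambda-ring} to read off the coefficients of $\sigma_t$). For such $X$ one has $M^c(X)\cong M(X)$, the motive of the smooth projective variety $X$; the functor $M$ is monoidal on smooth projectives, so the isomorphism $M^c(X^{n})\cong M^c(X)^{\otimes n}$ is $\Sigma_n$-equivariant; and $S^nX=X^n/\Sigma_n$. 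With $\QQ$-coefficients, the motive of a quotient of a smooth projective variety by a finite group $G$ is the image of the averaging idempotent $\tfrac1{|G|}\sum_{g\in G}g$; applying this with $Y=X^n$, $G=\Sigma_n$ gives
\[
 M^c(S^nX)\ \cong\ M^c(X^n)^{\Sigma_n}\ \cong\ \bigl(M^c(X)^{\otimes n}\bigr)^{\Sigma_n}\ =\ \Sym^n M^c(X),
\]
an isomorphism in $\Meff$, which yields the required identity in $K_0(\Meff)$.

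The one step I expect to be a genuine obstacle is the identification $M^c(Y/G)\cong M^c(Y)^G$ for the finite group action, when the quotient $Y/G$ is singular (as it is for $Y=X^n$, $\dim X\ge 2$, $n\ge 2$). It rests on the transfer for finite surjective morphisms in $\DM^{\eff}_{gm}$ with rational coefficients, together with the fact that $Y\to Y/G$ is generically a $G$-torsor, and this is exactly where characteristic~$0$ and $\QQ$-linearity are essential; I would cite this from the literature (it underlies Guletskii's treatment \cite{Guletskii} and is also available through the standard facts in \cite{MVW}). The auxiliary reduction to smooth projective generators via resolution of singularities, and the bookkeeping with $\sigma_t$ and $\lambda_t$, are then routine.
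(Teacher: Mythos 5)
Your proposal is correct, and it reaches the same key identity as the paper --- $[M^c(S^nX)]=\Sym^n[M^c(X)]$ in $K_0(\Meff)$ --- but organizes the reduction differently. The paper's proof is a one-liner: it asserts that it suffices to prove this identity for \emph{any} quasi-projective $X$ and cites del Ba\~no--Navarro \cite[5.3]{dBN} for that general statement. You instead observe that both composites around the square are homomorphisms from the additive group $K_0(Var)$ to the multiplicative group $1+tK_0(\Meff)[[t]]$ (checking multiplicativity of $\zeta_t$ via the stratification $S^nX=\coprod_{a+b=n}S^aU\times S^bZ$, and additivity of $M^c$ via the localization triangle), and then use resolution of singularities in characteristic~$0$ to reduce to the generators $[X]$ with $X$ smooth projective, where the identity follows from the averaging-idempotent description $M(Y/G)\cong M(Y)^G$ of the motive of a finite quotient of a smooth projective variety. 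What each route buys: the paper's is shorter but leans on a citation covering arbitrary quasi-projective $X$; yours needs only the classical smooth-projective form of the quotient-motive theorem (which is exactly what del Ba\~no--Navarro prove in their main results) and makes explicit the well-definedness and homomorphism checks that the paper's ``it suffices'' quietly presupposes, at the modest cost of invoking resolution of singularities --- a hypothesis already in force in this part of the paper, since $M^c(X)$ is only known to be an effective geometric motive in characteristic~$0$. You correctly isolate $M^c(Y/G)\cong M^c(Y)^G$ as the one step requiring $\QQ$-linear transfers, which is precisely the content of the reference the paper cites.
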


\begin{proof}
It suffices to show that $[M^c(S^nX)]=\Sym^n[M^c(X)]$ in
$K_0(\Meff)$ for any $X$. This is proven by del Ba\~no 
and Navarro in \cite[5.3]{dBN}.
\end{proof}

\begin{Definition}\label{def:dr}
Following \cite[2.2]{LL04}, we say that a power series 
$f(t)=\sum r_nt^n\in R[[t]]$ is {\it determinantally rational} over 
a ring $R$ if there exists an $m,n_0>0$ such that the $m\times m$ 
Hankel 
matrices $(r_{n+i+j})_{i,j=1}^m$ have determinant~0 for all $n>n_0$.

The name comes from the classical fact (\'Emile Borel \cite{Borel}) that 
when $R$ is a field (or a domain) a power series is determinantally rational
if and only if it is a rational function $p(t)/q(t)$. For later use,
we observe that $\deg(q)<m$ and $\deg(p)<n_0$. (This is relation
($\alpha$) in \cite{Borel}.)
\end{Definition}

Clearly, if $f(t)$ is not determinantally rational over $R$ and $R\subset R'$
then $f(t)$ cannot be determinantally rational over $R'$.

As observed in \cite[2.4]{LL04}, if $f$ is a rational function in the sense
that $gf=h$ for polynomials $g(t), h(t)$ with $g(0)=1$ then $f$ is
determinantally rational. For example,
if $x=a_b$ is a finite-dimensional element of a $\lambda$-ring $R$,
with $a$ even and $b$ odd, then $\lambda_t(a)$ and 
$\lambda_t(-b)=\lambda_t(b)^{-1}$ are
polynomials so $\lambda_t(x)=\lambda_t(a)\lambda_t(b)$ and
$\sigma_t(x)=\lambda_t(x)^{-1}$ are rational functions.
This was observed by Andr\'e in \cite{Andre}.

\begin{proof}[Proof of Theorem \ref{bigthm}]
Let $X$ be the product $C\times D$ of two smooth projective curves
of genus $>0$, so that $p_g(X)>0$.
Larsen and Lunts showed in \cite[2.4, 3.9]{LL04} that $\zeta_{\,t}(X)$ 
is not determinantally rational over $R=K_0(Var)$.
On the other hand, Kimura proved in \cite{Kimura} that $X$ is a
finite-dimensional object in $\Meff$, so $\sigma_t(X)=\lambda_t(X)^{-1}$
is a determinantally rational function in $R'=K_0(\Meff)$.
It follows that $R\to R'$ cannot be an injection.
\end{proof}

\section{Symmetric functions}

We devote this section to a quick study of the ring $\Lambda$ of
symmetric functions, and especially the Schur polynomials $s_\pi$,
 referring the reader to \cite{MacD} for more information.  
In the next section, we will use these polynomials to define the
notion of Schur-finite elements in a $\lambda$-ring.

The ring $\Lambda$ is defined as the ring of symmetric ``polynomials''
in variables $\xi_i$. More precisely, it is the subring of the power series
ring in $\{\xi_n\}$ generated by $e_1=\sum \xi_n$ and the other
{\it elementary symmetric power sums} $e_i\in\Lambda$; 
if we put $\xi_r=0$ for $r>n$ then $e_i$ is the $i^{th}$ 
elementary symmetric polynomial in $\xi_1,\dots,\xi_n$; see \cite{AT}.
A major role is also played by the {\it homogeneous power sums} 
$h_n=\sum \xi_{i_1}\cdots \xi_{i_n}$
(where the sum being taken over $i_1\le\cdots\le i_n$).
Their generating functions $E(t)=\sum e_nt^n$ and $H(t)=\sum h_nt^n$
are $\prod(1+\xi_it)$ and $\prod(1-\xi_it)^{-1}$, so that
$H(t)E(-t)=1$. In fact, $\Lambda$ is a graded polynomial ring 
in two relevant ways (with $e_n$ and $h_n$ in degree $n$):
\[  \Lambda = \ZZ[e_1,...,e_n,...] = \ZZ[h_1,...,h_n,...]. \]

Given a partition $\pi=(n_1,...,n_r)$ of $n$ (so that $\sum n_i=n$), 
we let $s_\pi\in\Lambda_n$ denote the Schur polynomial of $\pi$. 
The elements $e_n$ and $h_n$ of $\Lambda$ are identified with
$s_{(1,...,1)}$ and $s_{(n)}$, respectively. The Schur polynomials 
also form a $\ZZ$-basis of $\Lambda$ by \cite[3.3]{MacD}.
By abuse, we will say that a partition $\pi$ {\it contains} a partition 
$\lambda=(\lambda_1,...,\lambda_s)$ if $n_i\ge\lambda_i$ and $r\ge s$,
which is the same as saying that the Young diagram for $\pi$ contains
the Young diagram for $\lambda$.

Here is another description of $\Lambda$, taken from \cite{Knutson}:
$\Lambda$ is isomorphic to the direct sum $R_*$ of the 
representation rings $R(\Sigma_n)$, made into a ring via
the outer product $R(\Sigma_m)\otimes R(\Sigma_n)\to R(\Sigma_{m+n})$.
Under this identification, $e_n\in \Lambda_n$ is identified with 
the class of the trivial simple representation $V_n$ of $\Sigma_n$.
More generally, $s_\pi$ corresponds to the class $[V_\pi]$ 
in $R(\Sigma_n)$ of the irreducible representation corresponding to $\pi$. 
(See \cite[III.3]{Knutson}.)

\begin{Proposition}
$\Lambda$ is a graded Hopf algebra, with coproduct $\Delta$ and
antipode $S$ determined by the formulas
\[
\Delta(e_n)=\sum_{i+j=n} e_i\otimes e_j, \quad
S(e_n)=h_n \text{ and } S(h_n)=e_n.
\]
\end{Proposition}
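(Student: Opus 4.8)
The plan is to verify that $\Lambda$, with the stated $\Delta$ and $S$, is a graded Hopf algebra by checking each structural axiom on the polynomial generators $e_1, e_2, \dots$ (which we may do, since $\Lambda = \ZZ[e_1, e_2, \dots]$ is free as a commutative ring). First I would observe that the formula $\Delta(e_n) = \sum_{i+j=n} e_i \otimes e_j$ (with the convention $e_0 = 1$) extends uniquely to a ring homomorphism $\Delta\colon \Lambda \to \Lambda \otimes \Lambda$ by the universal property of polynomial rings, and likewise the counit $\varepsilon\colon \Lambda \to \ZZ$ sending each $e_n$ ($n \geq 1$) to $0$ extends to a ring homomorphism. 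One then checks coassociativity and counitality purely on the generators: both $(\Delta \otimes \mathrm{id})\Delta(e_n)$ and $(\mathrm{id} \otimes \Delta)\Delta(e_n)$ equal $\sum_{i+j+k=n} e_i \otimes e_j \otimes e_k$, and the counit axioms are immediate from $\varepsilon(e_0)=1$, $\varepsilon(e_n)=0$ for $n>0$. Cocommutativity follows because the defining sum for $\Delta(e_n)$ is symmetric in $i$ and $j$. Gradedness is clear: $\Delta$ and $\varepsilon$ preserve the grading in which $e_n$ has degree $n$, since each summand $e_i \otimes e_j$ has total degree $i+j=n$.

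The antipode is the only place requiring real work. The generating-function identity $H(t)E(-t) = 1$ from the text gives, comparing coefficients of $t^n$ for $n \geq 1$, the Newton-type relation $\sum_{i+j=n} (-1)^i e_i h_j = 0$, which rewrites as $h_n = \sum_{i=1}^{n} (-1)^{i+1} e_i h_{n-i}$; this exhibits $S(e_n) := h_n$ as determined recursively and confirms it is well-defined. To prove $S$ is a genuine antipode, I would verify the defining identity $m \circ (S \otimes \mathrm{id}) \circ \Delta = \eta \circ \varepsilon = m \circ (\mathrm{id} \otimes S) \circ \Delta$ on each $e_n$. Applied to $e_n$, the left side is $\sum_{i+j=n} S(e_i) e_j = \sum_{i+j=n} h_i e_j$, and this equals $0$ for $n \geq 1$ — which is precisely the coefficient identity coming from $H(t)E(t)$... but wait, that product is not $1$. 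The correct observation is: $\sum_{i+j=n}(-1)^j h_i e_j = 0$ for $n\ge 1$ from $H(t)E(-t)=1$; since the Hopf algebra $\Lambda$ is \emph{commutative and cocommutative} with all generators $e_n$ in positive degree, it is automatically connected graded, and for a connected graded bialgebra the antipode exists and is unique. So the cleanest route is: establish the bialgebra axioms as above, note $\Lambda$ is connected graded (i.e. $\Lambda_0 = \ZZ$), invoke the standard fact that a connected graded bialgebra has a unique antipode, and then \emph{identify} that antipode by showing the map $S$ with $S(e_n) = h_n$, $S(h_n) = e_n$ satisfies the antipode equation — the two formulas being consistent because $S$ is an involution, which follows from the symmetry of the relation between the $e$'s and $h$'s (the automorphism $\omega$ of $\Lambda$).

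The key steps, in order, are: (1) extend $\Delta$ and $\varepsilon$ to ring maps using freeness of $\ZZ[e_1,e_2,\dots]$; (2) check coassociativity, counitality, and cocommutativity on generators, and gradedness of all structure maps — these are routine symmetric-sum manipulations; (3) observe $\Lambda$ is connected graded, so a unique antipode exists; (4) using $H(t)E(-t)=1$, derive the coefficientwise relations that let one solve for $h_n$ in terms of $e_1,\dots,e_n$ and conversely, establishing that $S(e_n)=h_n$ determines a well-defined ring endomorphism and that it is an involution (hence $S(h_n)=e_n$ is forced and consistent); (5) verify the antipode equation on each $e_n$ directly from these relations, or equally cite the formula $S(e_n)=(-1)^n p_n$... no — more simply, verify $\sum_{i+j=n} S(e_i)e_j = \varepsilon(e_n)$ using the $H$-$E$ identity. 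The main obstacle is step (5): making sure the combinatorial identity $\sum_{i+j=n} h_i e_j \cdot (\text{signs})$ that one actually gets from $H(t)E(-t)=1$ is the one the antipode axiom demands, and correctly accounting for the signs that distinguish $\Delta(e_n) = \sum e_i \otimes e_j$ (no signs) from the alternating identity; this is a short but sign-sensitive bookkeeping check, and it is where care is needed rather than ingenuity.
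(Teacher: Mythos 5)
Your steps (1)--(3) --- extending $\Delta$ and $\varepsilon$ by freeness of $\ZZ[e_1,e_2,\dots]$, checking coassociativity, counitality, cocommutativity and gradedness on generators, and noting connectedness --- are correct and routine; the paper disposes of all of this by citing Burroughs for the bialgebra structure and Macdonald for the involution interchanging $e_n$ and $h_n$. The genuine gap is exactly the step you label ``the main obstacle'' and then never execute: verifying the antipode axiom for the map $S(e_n)=h_n$. If you actually do the computation it fails. The axiom $m\circ(S\otimes\mathrm{id})\circ\Delta(e_n)=\varepsilon(e_n)\cdot 1$ demands $\sum_{i+j=n}S(e_i)e_j=0$ for $n\ge1$, and with $S(e_i)=h_i$ the case $n=1$ already gives $h_1e_0+h_0e_1=h_1+e_1=2e_1\ne0$ (indeed $\Delta(e_1)=e_1\otimes1+1\otimes e_1$ forces $S(e_1)=-e_1$ for any antipode). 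The identity you correctly extract from $H(t)E(-t)=1$, namely $\sum_{i+j=n}(-1)^ie_ih_j=0$ for $n\ge1$, is precisely the antipode identity for the map $e_n\mapsto(-1)^nh_n$. So the unsigned map interchanging $e_n$ and $h_n$ is the classical ring involution $\omega$, while the antipode of the connected graded bialgebra $\Lambda$ is $(-1)^n\omega$ on the degree-$n$ component; your appeal to uniqueness of antipodes cannot rescue the unsigned formula --- uniqueness only confirms that the signed map is the antipode and hence that the unsigned one is not.

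To repair the argument you must carry the sign through: either verify the antipode axiom for $S(e_n)=(-1)^nh_n$ directly from $\sum(-1)^re_rh_{n-r}=0$ (which is what the paper's one-line justification amounts to, read carefully), and record that the Proposition's displayed formula $S(e_n)=h_n$ must be understood up to the sign $(-1)^n$ on the degree-$n$ part, or else state explicitly the convention under which the sign disappears. As written, your step (5) is not deferred bookkeeping; it is the one non-trivial assertion in the Proposition, and the unsigned version of it is false.
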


\begin{proof}
The graded bialgebra structure is well known and due to
Burroughs \cite{Burroughs}, who defined the coproduct on $R_*$ 
as the map induced from the restriction maps
$R(\Sigma_{m+n})\to R(\Sigma_m)\otimes R(\Sigma_n)$, and established
the formulas $\Delta(e_n)=\sum_{i+j=n} e_i\otimes e_j$.
The fact that there is a ring involution $S$ interchanging $e_n$ and $h_n$
is also well known; see \cite[I(2.7)]{MacD}. 
The fact that $S$ is an antipode does not seem to be well known, but it 
is immediate from the formula $\sum(-1)^re_rh_{n-r}=0$ of \cite[I(2.6)]{MacD}.
\end{proof}

\begin{Remark}
Atiyah shows in \cite[1.2]{Atiyah} that $\Lambda$ is isomorphic to
the graded dual $R^*=\oplus\Hom(R(\Sigma_n),\ZZ)$.
That is, if $\{ v_\pi\}$ is the dual basis in $R^n$ to the basis
$\{[V_\pi]\}$ of simple representations in $R_n$ and the restriction
of $[V_\pi]$ is $\sum c_\pi^{\mu\nu}[V_\mu]\otimes [V_\nu]$
then $v_\mu v_\nu = \sum_\pi c_\pi^{\mu\nu} v_\pi$ in $R^*$.
Thus the product studied by Atiyah on the graded dual 
$R^*$ is exactly the algebra structure dual to the coproduct $\Delta$.
\end{Remark}

\goodbreak
Let $\pi'$ denote the conjugate partition to $\pi$. 
The {\it Jacobi-Trudi identities} 
$s_\pi=\det|h_{\pi_{i}+j-i}|=\det|e_{\pi'_{i}+j-i}|$ 
show that the antipode $S$ interchanges $s_\pi$ and $s_{\pi'}$.
(Jacobi conjectured the identities, and his student Nicol\'o Trudi 
verified them in 1864; they were rediscovered by Giovanni Giambelli 
in 1903 and are sometimes called the {\it Giambelli identities}).

Let $I_{e,n}$ denote the ideal of $\Lambda$ generated by the $e_i$ with 
$i\ge n$. The quotient $\Lambda/I_{e,n}$ is the polynomial ring 
$\Lambda_{n-1}=\ZZ[e_1,...,e_{n-1}]$.
Let $I_{h,n}$ denote $S(I_{e,n})$, i.e., the ideal of $\Lambda$
generated by the $h_i$ with $i\ge n$.

\begin{Proposition}\label{Jacobi-Trudi}
The Schur polynomials $s_\pi$ for partitions $\pi$ containing 
$(1^{n})$ (i.e., with at least $n$ rows) form a $\ZZ$-basis for the 
ideal $I_{e,n}$. The Schur polynomials with at most $n$ rows form 
a $\ZZ$-basis of $\Lambda_n$.

Similarly, the Schur polynomials $s_\pi$ for partitions $\pi$ containing
$(n)$ (i.e., with $\pi_1\!\ge\! n$) form a $\ZZ$-basis for the ideal 
$I_{h,n}$.
\end{Proposition}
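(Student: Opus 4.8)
The plan is to deduce everything from the Jacobi--Trudi identities together with the fact that the Schur polynomials $\{s_\pi\}$ form a $\ZZ$-basis of $\Lambda$ (\cite[3.3]{MacD}), which was recalled just above. First I would treat the statement about $I_{e,n}$. Write any element of $\Lambda$ uniquely as a $\ZZ$-linear combination of Schur polynomials. For a partition $\pi$ with at least $n$ rows, the conjugate partition $\pi'$ satisfies $\pi'_1\ge n$, so the Jacobi--Trudi identity $s_\pi=\det|e_{\pi'_i+j-i}|$ has the property that \emph{every} entry in the first column is of the form $e_{\pi'_i-i+1}$, and in particular the $(1,1)$ entry is $e_{\pi'_1}$ with $\pi'_1\ge n$; expanding the determinant along the first column exhibits $s_\pi$ as an $e_k$ with $k\ge n$ times a (signed) minor, hence $s_\pi\in I_{e,n}$. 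Conversely, $\Lambda/I_{e,n}$ is the polynomial ring $\ZZ[e_1,\dots,e_{n-1}]=\Lambda_{n-1}$ (as recalled in the excerpt), and in this quotient the images of the Schur polynomials $s_\pi$ with at most $n-1$ rows form a $\ZZ$-basis of $\Lambda_{n-1}$ — this is the statement that the Schur polynomials in $n-1$ (or fewer) variables form a $\ZZ$-basis of the symmetric functions in those variables, which is \cite[3.3]{MacD} applied with the variables $\xi_n,\xi_{n+1},\dots$ set to $0$. So the Schur polynomials split into two families: those with $\ge n$ rows, all lying in $I_{e,n}$, and those with $\le n-1$ rows, mapping to a basis of the quotient. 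Counting ranks (a change-of-basis argument with the Schur basis on one side and "Schur polynomials with few rows $\sqcup$ images in $I_{e,n}$ of those with many rows" on the other) forces the $s_\pi$ with $\ge n$ rows to be a $\ZZ$-basis of $I_{e,n}$: they span (by the quotient computation any element of $I_{e,n}$ differs from a combination of "many-row" Schur polynomials by something mapping to $0$ in the quotient, i.e.\ itself in the span), and they are $\ZZ$-linearly independent as a subset of the Schur basis.

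For the second sentence, that the Schur polynomials with at most $n$ rows form a $\ZZ$-basis of $\Lambda_n=\ZZ[e_1,\dots,e_n]$: this is again \cite[3.3]{MacD} specialized to $n$ variables (set $\xi_r=0$ for $r>n$), using that a Schur polynomial in $n$ variables is nonzero precisely when its partition has at most $n$ rows, and that the specialization map $\Lambda\to\Lambda_n$ sends $s_\pi$ to the Schur polynomial $s_\pi(\xi_1,\dots,\xi_n)$.

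The statement about $I_{h,n}$ is then obtained by applying the antipode $S$. Since $S$ is a ring involution with $S(e_i)=h_i$, it carries $I_{e,n}$ isomorphically onto the ideal generated by the $h_i$ with $i\ge n$, which is $I_{h,n}=S(I_{e,n})$ by definition. By the Jacobi--Trudi identities, $S$ interchanges $s_\pi$ and $s_{\pi'}$ (as noted in the excerpt), and $\pi$ has $\ge n$ rows if and only if $\pi'$ has first part $\pi'_1\ge n$, i.e.\ if and only if $\pi'$ contains $(n)$. Hence $S$ carries the $\ZZ$-basis $\{s_\pi : \pi \text{ has} \ge n \text{ rows}\}$ of $I_{e,n}$ to the set $\{s_{\pi'} : \pi'_1\ge n\}=\{s_\lambda:\lambda \text{ contains }(n)\}$, which is therefore a $\ZZ$-basis of $I_{h,n}$.

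The only step needing care — the main obstacle — is the spanning half of the $I_{e,n}$ claim: one must check that an arbitrary element of $I_{e,n}$, written in the Schur basis, has no "few-row" Schur polynomial appearing with nonzero coefficient. This follows from the quotient computation above (the few-row Schur polynomials map to a \emph{basis}, in particular a linearly independent set, of $\Lambda/I_{e,n}$, so a nonzero few-row component would survive in the quotient, contradicting membership in $I_{e,n}$), but it is worth stating explicitly rather than leaving implicit, since it is exactly the point where the basis property of \cite[3.3]{MacD} (not just the fact that the $s_\pi$ are symmetric functions) is used.
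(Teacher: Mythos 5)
Your overall architecture is exactly the paper's: show the few-row Schur polynomials project to a $\ZZ$-basis of $\Lambda/I_{e,n}=\Lambda_{n-1}$ (Macdonald), show the many-row ones lie in $I_{e,n}$ via Jacobi--Trudi, conclude by the basis property of $\{s_\pi\}$ in $\Lambda$, and transport everything to $I_{h,n}$ by the antipode. The bookkeeping you flag as the delicate point (a nonzero few-row component would survive in the quotient) is fine and is the same argument the paper uses.

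However, the one computational step at the heart of the proof is carried out incorrectly: you expand $\det|e_{\pi'_i+j-i}|$ along the \emph{first column}. The first-column entries are $e_{\pi'_i+1-i}$ for $i=1,\dots,m$; only the $(1,1)$ entry $e_{\pi'_1}$ is guaranteed to have index $\ge n$, while for $i\ge2$ the index $\pi'_i+1-i$ drops and typically reaches $e_1$ or $e_0=1$. For instance, for $\pi=(2,1,1)$ and $n=3$ one has $\pi'=(3,1)$ and the first column is $(e_3,\,e_0)=(e_3,\,1)$, so the cofactor expansion along that column is $e_3e_1-1\cdot e_4$, which is a \emph{sum} of terms, not ``an $e_k$ with $k\ge n$ times a signed minor,'' and the terms beyond the first are not visibly in $I_{e,n}$. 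The correct move --- and the one the paper makes --- is to expand along the first \emph{row}: its entries are $e_{\pi'_1+j-1}=e_{r+j-1}$, all of index $\ge r\ge n$ (here $r$ is the number of rows of $\pi$), so every term of that expansion lies in $I_{e,r}\subseteq I_{e,n}$. With that single repair (first row instead of first column) your proof goes through and coincides with the paper's.
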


\begin{proof}
We prove the assertions about $I_{e,n}$; the assertion about $I_{h,n}$
follows by applying the antipode $S$. By  \cite[I.3.2]{MacD},
the $s_\pi$ which have fewer than $n$ rows project onto a $\ZZ$-basis of 
$\Lambda_{n-1}=\Lambda/I_{e,n}$. Since the $s_\pi$ form a $\ZZ$-basis 
of $\Lambda$, it suffices to show that every
partition $\pi=(\pi_1,...,\pi_r)$ with $r>n$ is in $I_{e,n}$.
Expansion along the first row of the Jacobi-Trudi identity
$s_\pi=\det|e_{\pi'_{i}+j-i}|$ shows that $s_\pi$ is in the ideal $I_{e,r}$.
\end{proof}

\begin{Corollary}\label{Imn}
The ideal $I_{h,m}\cap I_{e,n}$ of $\Lambda$ has a $\ZZ$-basis
consisting of the Schur polynomials $s_\pi$
for partitions $\pi$ containing the hook $(m,1^{n-1})=(m,1,...,1)$.
\end{Corollary}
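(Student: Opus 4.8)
The plan is to exploit the fact, established in Proposition~\ref{Jacobi-Trudi}, that each of the two ideals $I_{e,n}$ and $I_{h,m}$ is the $\ZZ$-span of a \emph{sub-collection} of the Schur-polynomial $\ZZ$-basis of $\Lambda$: the ideal $I_{e,n}$ is spanned by those $s_\pi$ with $\pi$ containing $(1^n)$, and $I_{h,m}$ by those $s_\pi$ with $\pi$ containing $(m)$. Once one is in this situation the intersection is forced by pure linear algebra, and the only remaining task is a one-line combinatorial translation of the two containment conditions into a single one. I do not expect a genuine obstacle here; the real content has already been absorbed into Proposition~\ref{Jacobi-Trudi}.

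First I would record the elementary observation that if $M$ is a $\ZZ$-module free on a basis $\{b_j\}_{j\in J}$, and $S,T\subseteq J$, then $\bigl(\bigoplus_{j\in S}\ZZ b_j\bigr)\cap\bigl(\bigoplus_{j\in T}\ZZ b_j\bigr)=\bigoplus_{j\in S\cap T}\ZZ b_j$: an element lying in both summands has, in its unique expansion in the basis, zero coefficient at every index not in $S$ and at every index not in $T$, hence at every index not in $S\cap T$, while the reverse inclusion is obvious. Taking $M=\Lambda$, $\{b_j\}=\{s_\pi\}$, with $S$ the set of partitions containing $(1^n)$ and $T$ the set of partitions containing $(m)$, Proposition~\ref{Jacobi-Trudi} then yields immediately that $I_{h,m}\cap I_{e,n}$ is free on $\{\,s_\pi : \pi\text{ contains both }(1^n)\text{ and }(m)\,\}$.

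It then remains to check that a partition $\pi$ contains both $(1^n)$ and $(m)$ if and only if it contains the hook $(m,1^{n-1})$. Containing $(1^n)$ means having at least $n$ rows, i.e.\ $\pi'_1\ge n$; containing $(m)$ means $\pi_1\ge m$. On the other hand the hook $(m,1^{n-1})$ has $n$ parts, with first part $m$ and all later parts equal to $1$, so $\pi\supseteq(m,1^{n-1})$ means precisely that $\pi_1\ge m$ and $\pi_i\ge 1$ for $2\le i\le n$ --- and for a partition the latter condition is exactly ``$\pi$ has at least $n$ rows''. Hence the two descriptions of the index set coincide, and the corollary follows. (Pictorially, this last step just says that the Young diagram of $(m,1^{n-1})$ is the union of its top row $(m)$ and its first column $(1^n)$.)
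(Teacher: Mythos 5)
Your proof is correct and is exactly the argument the paper intends: the Corollary is stated without proof as an immediate consequence of Proposition~\ref{Jacobi-Trudi}, and your two steps (intersecting spans of sub-collections of a common $\ZZ$-basis, plus the observation that containing both $(1^n)$ and $(m)$ is the same as containing the hook $(m,1^{n-1})$) are precisely what is being left to the reader.
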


\begin{Definition}\label{I-lambda}
For any partition $\lambda=(\lambda_1,...,\lambda_r)$, let $I_\lambda$
denote the subgroup of $\Lambda$ generated by the Schur polynomials
$s_\pi$ for which $\pi$ contains $\lambda$, i.e., $\pi_i\ge\lambda_i$
for $i=1,...,r$. We have already encountered the special cases
$I_{e,n}=I_{(1,...,1)}$ and $I_{h,n}=I_{(n)}$ in Proposition 
\ref{Jacobi-Trudi}, and 
$I_{(m,1,...,1)}=I_{h,m}\cap I_{e,n}$ in Corollary \ref{Imn}.
\end{Definition}

\begin{Example}\label{Lambda21}
Consider the partition $\lambda=(2,1)$. 
Since $I_\lambda=I_{h,2}\cap I_{e,2}$ by Corollary \ref{Imn}, 
$\Lambda_\lambda$ is the pullback of
$\ZZ[a]$ and $\ZZ[b]$ along the common quotient 
$\ZZ[a]/(a^2)=\Lambda/(I_{(1,1)}+I_{(2)})$. The universal element
of $\Lambda_{\lambda}$ is $x=(a,b)$ and if we set
$y=(0,b^2)$ then $\Lambda_{(2,1)}\cong\Z[x,y]/(y^2-x^2y)$.
Since $\lambda^n(b)=b^n$ for all $n$, it is easy to check that
$\lambda^{2i}(x)=y^i$ and $\lambda^{2i+1}(x)=xy^i$.
\end{Example}

\begin{Lemma}\label{idealI-lambda}
The $I_{\lambda}$ are ideals of $\Lambda$, and
$\{ I_\lambda\}$ is closed under intersection. 
\end{Lemma}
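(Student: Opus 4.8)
The plan is to prove the two assertions separately, using the Pieri rule and the fact (from \cite[3.3]{MacD}) that the Schur polynomials $s_\pi$ form a $\ZZ$-basis of $\Lambda$.

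For the ideal assertion, since $\Lambda=\ZZ[h_1,h_2,\dots]$ and $I_\lambda$ is by definition an additive subgroup, it suffices to check that $h_n\cdot s_\pi\in I_\lambda$ whenever $\pi$ contains $\lambda$ and $n\ge 1$; the general case then follows by taking products and sums. By the Pieri rule, $h_n\cdot s_\pi=\sum_\nu s_\nu$, where $\nu$ runs over the partitions obtained from $\pi$ by adjoining a horizontal strip of $n$ boxes. Every such $\nu$ satisfies $\nu_i\ge\pi_i$ for all $i$, so if $\pi$ contains $\lambda$ then so does each $\nu$ occurring, and hence $h_n\cdot s_\pi\in I_\lambda$. (One may equally well run this argument with $e_n$ and vertical strips, using $\Lambda=\ZZ[e_1,e_2,\dots]$.)

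For closure under intersection, given partitions $\lambda=(\lambda_1,\dots,\lambda_r)$ and $\mu=(\mu_1,\dots,\mu_s)$, set $\nu_i=\max(\lambda_i,\mu_i)$, padding the shorter partition with zeros. Since the componentwise maximum of two weakly decreasing, eventually-zero sequences is again weakly decreasing and eventually zero, $\nu=\lambda\vee\mu$ is a partition, and a partition $\pi$ contains $\nu$ if and only if it contains both $\lambda$ and $\mu$. Because the $s_\pi$ form a $\ZZ$-basis of $\Lambda$, each of $I_\lambda$, $I_\mu$, $I_\nu$ is the span of the corresponding sub-collection of basis vectors, and the intersection of two such spans is the span of the intersection of the index sets; therefore $I_\lambda\cap I_\mu=I_{\lambda\vee\mu}$, so $\{I_\lambda\}$ is closed under intersection.

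The only real point requiring care — and it is a mild one — is the bookkeeping around partition lengths: ``$\pi$ contains $\lambda$'' also forces $\pi$ to have at least as many rows as $\lambda$, so when comparing $\lambda$, $\mu$, and $\lambda\vee\mu$ one must normalize by padding with zeros. Beyond that, the argument is a direct consequence of the Pieri rule and the basis property, with no estimates or case analysis needed.
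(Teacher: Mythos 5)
Your proof is correct and follows essentially the same route as the paper: the Pieri rule for $h_n s_\pi$ (horizontal strips only enlarge the partition, hence preserve containment of $\lambda$) gives the ideal property since $\Lambda=\ZZ[h_1,h_2,\dots]$, and the intersection claim reduces to the observation that $\pi$ contains both $\lambda$ and $\mu$ iff it contains their componentwise maximum, combined with the fact that the $s_\pi$ form a $\ZZ$-basis. Your explicit remarks about spans of sub-collections of a basis and about zero-padding are just slightly more careful versions of what the paper leaves implicit.
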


\begin{proof}
The Pieri rule writes $h_p s_\pi$ as a sum of $s_\mu$, where $\mu$ 
runs over partitions consisting of $\pi$ and $p$ other elements,
no two in the same column. Thus $I_\lambda$ is closed under
multiplication by the $h_p$. As every element of $\Lambda$ is a
polynomial in the $h_p$, $I_\lambda$ is an ideal.

If $\mu=(\mu_1,...,\mu_s)$ is another partition, then $s_\pi$ is in
$I_\lambda\cap I_\mu$ 
if and only if $\pi_i\ge\max\{\lambda_i,\mu_i\}$
Thus $I_\lambda\cap I_\mu=I_{\lambda\cup\mu}$.
\end{proof}

\begin{Remark}\label{I+I}
The ideal $I_\lambda + I_\mu$ need not 
be of the form $I_\nu$ for any $\nu$. 
For example, $I=I_{(2)}+I_{(1,1)}$ contains every Schur polynomial 
except $1$ and $s_1=e_1$.
\end{Remark}

\goodbreak
We conclude this section by connecting $\Lambda$ with $\lambda$-rings.
Recall from  \cite[4.4]{SGA6}, \cite[I.4]{G57}  or \cite[\S2]{AT}
that the universal $\lambda$-ring on one generator $a=a_1$ is the 
polynomial ring $\ZZ[a_1,\dots,a_n,\dots]$, with $\lambda^n(a)=a_n$.
Given an element $x$ in a $\lambda$-ring $R$, there is a unique morphism
$u_x:\Lambda\to R$ with $u_x(a)=x$.
Following \cite{Atiyah} and \cite{Knutson}, we identify
this universal $\lambda$-ring with $\Lambda$,
where the $a_i$ are identified with the $e_i\in\Lambda$.

The ring $\Lambda$ is naturally isomorphic to the ring of natural 
operations on the category of $\lambda$-rings; 
an operation $\phi$ corresponds to $\phi(a)\in\Lambda$. Conversely,
given $f\in\Lambda$, the formula $f(x)=u_x(f)$ defines
a natural operation.
The operation $\lambda^n$ corresponds to $e_n$.
The operation $\sigma^n$, defined by $\sigma^n(x)=(-1)^n\lambda^n(-x)$,
corresponds to $h_n$; this may be seen by comparing the generating 
functions $H(t)=E(-t)^{-1}$ and $\sigma_t(x)=\lambda_{-t}(x)^{-1}$.

\begin{Proposition}\label{phi(x+y)}
If $\phi$ is an element of $\Lambda$, and 
$\Delta(\phi)=\sum \phi'_i\otimes\phi''_i$ then the corresponding
natural operation on $\lambda$-rings satisfies
$\phi(x+y) = \sum \phi'_i(x)\phi''_i(y)$.
\end{Proposition}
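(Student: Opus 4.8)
The plan is to recognise the coproduct $\Delta$ as the map on the universal $\lambda$-ring that classifies the element $a\otimes 1+1\otimes a$, and then to transport this identification along the universal maps $u_x$ and $u_y$.

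First I would pin down $\Delta$ concretely. Since $\Lambda$ is the universal $\lambda$-ring on one generator $a$, and since (as recalled before the Splitting Principle) the tensor product $\Lambda\otimes\Lambda$ is the coproduct of two copies of $\Lambda$ in $\lambda$-rings and hence again a $\lambda$-ring, there is a unique $\lambda$-ring homomorphism $\delta\colon\Lambda\to\Lambda\otimes\Lambda$ with $\delta(a)=a\otimes 1+1\otimes a$. The two coprojections $\ell\mapsto\ell\otimes 1$ and $\ell\mapsto 1\otimes\ell$ are $\lambda$-ring homomorphisms, so $\lambda^i(a\otimes 1)=e_i\otimes 1$ and $\lambda^j(1\otimes a)=1\otimes e_j$; applying the axiom $\lambda_t(u+v)=\lambda_t(u)\lambda_t(v)$ with $u=a\otimes 1$ and $v=1\otimes a$ then gives $\delta(e_n)=\lambda^n(a\otimes 1+1\otimes a)=\sum_{i+j=n}e_i\otimes e_j$. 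As the $e_n$ generate $\Lambda$ as a ring, $\delta$ and $\Delta$ agree on ring generators, so $\delta=\Delta$ (which incidentally re-proves that $\Delta$ is a $\lambda$-ring homomorphism).

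Next, for $x,y$ in a $\lambda$-ring $R$, I would note that the universal maps $u_x,u_y\colon\Lambda\to R$ both restrict to the unique homomorphism out of the initial $\lambda$-ring $\ZZ$, so by the coproduct property they induce a $\lambda$-ring homomorphism $m\colon\Lambda\otimes\Lambda\to R$ with $m(\ell\otimes\ell')=u_x(\ell)\,u_y(\ell')$. The composite $m\circ\Delta\colon\Lambda\to R$ is a $\lambda$-ring homomorphism sending $a$ to $m(a\otimes 1+1\otimes a)=u_x(a)+u_y(a)=x+y$, so the uniqueness part of the universal property of $\Lambda$ forces $m\circ\Delta=u_{x+y}$.

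Finally I would read off the statement: for $\phi\in\Lambda$ with $\Delta(\phi)=\sum\phi'_i\otimes\phi''_i$,
\[
\phi(x+y)=u_{x+y}(\phi)=m(\Delta(\phi))=\sum m(\phi'_i\otimes\phi''_i)=\sum u_x(\phi'_i)\,u_y(\phi''_i)=\sum\phi'_i(x)\,\phi''_i(y).
\]
No genuine obstacle arises: everything is a formal consequence of the universal property once $\delta=\Delta$ is established, and the only point demanding a little care is precisely that identification — namely checking that the combinatorially defined coproduct on $\Lambda$ is the one classifying $a\otimes 1+1\otimes a$, for which one needs the multiplicativity of $\lambda_t$ and the fact that $\Lambda\otimes\Lambda$ is a coproduct of $\lambda$-rings.
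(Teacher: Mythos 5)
Your argument is correct, but it takes a different route from the paper's. The paper argues directly: it lets $\Lambda'$ be the set of operations $\phi$ satisfying the stated identity, observes that $\Lambda'$ is a subring of $\Lambda$ because $\Delta$ and the evaluation maps $\phi\mapsto\phi(x)$ are ring homomorphisms, and then checks that the generators $e_n$ lie in $\Lambda'$ by matching $\Delta(e_n)=\sum e_i\otimes e_{n-i}$ against $\lambda^n(x+y)=\sum\lambda^i(x)\lambda^{n-i}(y)$. You instead characterize $\Delta$ by a universal property: using that $\Lambda\otimes\Lambda$ is the coproduct of two copies of $\Lambda$ in $\lambda$-rings (which the paper's colimit discussion does license), you show $\Delta$ is the unique $\lambda$-ring map classifying $a\otimes1+1\otimes a$, and then the identity $u_{x+y}=m\circ\Delta$ falls out of uniqueness. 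The key computation is the same in both proofs --- the multiplicativity of $\lambda_t$ applied to $e_n$ --- but the two arguments package it differently. Your version buys a conceptual reading of $\Delta$ as the ``addition'' comultiplication and makes it automatic that the expression $\sum\phi'_i(x)\phi''_i(y)$ is independent of the chosen decomposition of $\Delta(\phi)$, since it is just $m(\Delta(\phi))$; the paper's subring argument is shorter and needs nothing beyond the bialgebra structure already established, at the cost of leaving that well-definedness point implicit. Both are complete proofs.
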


\begin{proof}
Consider the set $\Lambda'$ of all operations in $\Lambda$ satisfying the
condition of the proposition. Since $\Delta$ is a ring homomorphism,
$\Lambda'$ is a subring of $\Lambda$. Since 
$\Delta(e_n)=\sum e_i\otimes e_{n-i}$ and 
$\lambda^n(x+y)=\sum\lambda^i(x)\lambda^{n-i}(y)$, $\Lambda'$ contains
the generators $e_n$ of $\Lambda$, and hence $\Lambda'=\Lambda$.
\end{proof}

The Littlewood-Richardson rule states that $\Delta([V_\pi])$ is a
sum $\sum c_\pi^{\mu\nu}[V_\mu]\otimes[V_\nu]$, where $\mu\subseteq\pi$
and $\pi$ is obtained from $\mu$ by concatenating $\nu$ in a certain way;
see \cite[\S I.9]{MacD}. By Proposition \ref{phi(x+y)}, we then have

\begin{Corollary}\label{LRrule}
$ s_\pi(x+y) = \sum c_\pi^{\mu\nu}s_\mu(x)s_\nu(y).$
\end{Corollary}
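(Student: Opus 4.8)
The plan is to deduce Corollary~\ref{LRrule} directly from Proposition~\ref{phi(x+y)} by taking $\phi = s_\pi$, so the only real content is to identify the comultiplication $\Delta(s_\pi)$ with the Littlewood--Richardson expansion $\sum c_\pi^{\mu\nu}\,[V_\mu]\otimes[V_\nu]$. First I would recall the identification of $\Lambda$ with $R_* = \bigoplus_n R(\Sigma_n)$ established earlier in the section, under which $s_\pi$ corresponds to $[V_\pi]$ and the coproduct $\Delta$ is, by the Proposition on the Hopf structure (following Burroughs), induced by the restriction maps $R(\Sigma_{m+n}) \to R(\Sigma_m)\otimes R(\Sigma_n)$. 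Thus $\Delta(s_\pi)$ is precisely the class of the restriction of $V_\pi$ along $\Sigma_m \times \Sigma_n \hookrightarrow \Sigma_{m+n}$, summed over all decompositions $m+n = |\pi|$.

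Next I would invoke the Littlewood--Richardson rule in the branching form cited from \cite[\S I.9]{MacD}: the multiplicity of $[V_\mu]\otimes[V_\nu]$ in that restriction is exactly the Littlewood--Richardson coefficient $c_\pi^{\mu\nu}$, which is nonzero only when $\mu \subseteq \pi$ and $\pi$ is obtained from $\mu$ by adding a skew shape with content $\nu$ subject to the lattice-word condition. This gives $\Delta(s_\pi) = \sum_{\mu,\nu} c_\pi^{\mu\nu}\, s_\mu \otimes s_\nu$. Feeding this identity into Proposition~\ref{phi(x+y)} with $\phi'_i \otimes \phi''_i$ ranging over the terms $s_\mu \otimes s_\nu$ and coefficients $c_\pi^{\mu\nu}$, the proposition yields immediately
\[
s_\pi(x+y) = \sum_{\mu,\nu} c_\pi^{\mu\nu}\, s_\mu(x)\, s_\nu(y),
\]
which is the assertion of the corollary.

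There is really no obstacle here beyond bookkeeping: the substance has been front-loaded into the Hopf-algebra description of $\Lambda$ and into Proposition~\ref{phi(x+y)}, and the Littlewood--Richardson rule is quoted as a known result. If I wanted an entirely self-contained argument avoiding the representation-theoretic dictionary, I could instead argue that the two sides of the claimed formula, viewed as operations in the variables $x,y$, agree because both the comultiplication on Schur functions and the addition formula are determined by their values on products of complete (or elementary) symmetric functions, for which Proposition~\ref{phi(x+y)} already gives the classical identity $h_n(x+y) = \sum_{i} h_i(x) h_{n-i}(y)$ together with the Jacobi--Trudi expression $s_\pi = \det|h_{\pi_i + j - i}|$; but this route merely re-derives the Littlewood--Richardson rule and is strictly more work, so I would present the representation-theoretic proof as the clean one. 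The main thing to be careful about is matching conventions: that the $c_\pi^{\mu\nu}$ appearing in the branching rule for $\Sigma_{m+n}\downarrow \Sigma_m\times\Sigma_n$ are the \emph{same} coefficients as in the product rule $s_\mu s_\nu = \sum c_\pi^{\mu\nu} s_\pi$, which is exactly the self-duality of $\Lambda$ as a Hopf algebra noted in the Remark following the Hopf-structure Proposition.
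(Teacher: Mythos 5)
Your argument is exactly the paper's: the corollary is deduced by combining the Littlewood--Richardson description of $\Delta(s_\pi)=\sum c_\pi^{\mu\nu}s_\mu\otimes s_\nu$ (via the identification of $\Delta$ with restriction of representations, quoted from \cite[\S I.9]{MacD}) with Proposition~\ref{phi(x+y)}. Correct, and no substantive difference from the paper's one-line derivation.
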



\section{Schur-finite $\lambda$-rings}\label{sf}

In this section we introduce the notion of a Schur-finite
element in a $\lambda$-ring $R$, and show that these elements form a subring of $R$
containing the subring of finite-dimensional elements.
We conjecture that they are the elements for which 
the virtual splitting principle holds.

\begin{Definition}\label{def:sf-element}
We say that an element $x$ in a $\lambda$-ring $R$ is {\it Schur-finite}
if there exists a partition $\lambda$ such that 
$s_\mu(x)=0$ for every partition $\mu$ containing $\lambda$. 
That is, $I_\lambda$ annihilates $x$. 
We call such a $\lambda$ a {\it bound} for $x$.
\end{Definition}
By Remark \ref{I+I}, 
$x\in R$ may have no unique minimal bound $\lambda$.
By Example \ref{lambda2-3} below, $s_\lambda(x)=0$ does not imply that
$\lambda$ is a bound for $x$.

\begin{Proposition}\label{sf-ideal}
Each $I_\lambda$ is a radical $\lambda$-ideal, and
$\Lambda_\lambda=\Lambda/I_\lambda$ is a reduced $\lambda$-ring. 
Thus every Schur-finite $x\!\in\! R$ with bound $\lambda$ determines 
a $\lambda$-ring map $f:\Lambda_\lambda\!\to\!R$ with $f(a)=x$.

Moreover, if $\lambda$ is a rectangular partition then $I_\lambda$ is
a prime ideal, and $\Lambda_\lambda$ is a subring of a polynomial ring
in which $a$ becomes finite-dimensional. 

In general, $\Lambda_\lambda$
is a subring of $\prod\Lambda_{\beta_i}$ and hence of a product of
polynomial rings in which $a$ becomes finite-dimensional.
\end{Proposition}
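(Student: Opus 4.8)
The plan is to reduce everything to understanding the structure of $\Lambda_\lambda = \Lambda/I_\lambda$ via bases of Schur polynomials, using the combinatorics already established in Section~3. First I would prove that $I_\lambda$ is a $\lambda$-ideal: by Lemma~\ref{idealI-lambda} it is an ideal, and the Pieri-rule computation there shows $I_\lambda$ is stable under multiplication by $h_p$; one needs in addition that $I_\lambda$ is stable under each $\sigma^n$ and $\lambda^n$. For this I would invoke Corollary~\ref{LRrule}: since $s_\mu(x+y)=\sum c_\mu^{\alpha\beta} s_\alpha(x)s_\beta(y)$, and since $s_\mu$ for $\mu\supseteq\lambda$ already span $I_\lambda$, one checks that applying $\lambda^n$ (equivalently $e_n$, with $\Delta(e_n)=\sum e_i\otimes e_{n-i}$) to an $s_\mu$ with $\mu\supseteq\lambda$ lands in $I_\lambda$ — this is again a Littlewood–Richardson bookkeeping argument, using that concatenating columns onto a diagram containing $\lambda$ still contains $\lambda$. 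Then $\Lambda_\lambda$ inherits a $\lambda$-ring structure and $f:\Lambda_\lambda\to R$, $a\mapsto x$, exists whenever $I_\lambda$ annihilates $x$.

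Next, reducedness. The key point is that $\Lambda_\lambda$, as an abelian group, is free on the residues of those $s_\pi$ with $\pi\not\supseteq\lambda$, and the multiplication is governed by Littlewood–Richardson; I would exhibit $\Lambda_\lambda$ concretely as a subring of a product of polynomial rings, which forces reducedness (and hence that $I_\lambda$ is radical). For the rectangular case $\lambda=(c^r)$ (an $r\times c$ block), the set of $\pi\not\supseteq\lambda$ is exactly those $\pi$ with at most $r-1$ rows \emph{or} with $\pi_r\le c-1$; the cleanest route is the one already modelled in Example~\ref{Lambda21}: write $I_{(c^r)}=I_{h,c}\cap I_{e,r}$ by Corollary~\ref{Imn}, so that $\Lambda_{(c^r)}$ is the pullback (fibre product) of $\Lambda/I_{e,r}=\Lambda_{r-1}=\ZZ[e_1,\dots,e_{r-1}]$ and $\Lambda/I_{h,c}=\ZZ[h_1,\dots,h_{c-1}]$ over their common quotient $\Lambda/(I_{e,r}+I_{h,c})$. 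Both factors are polynomial rings — in the first, $a$ maps to an even element of degree $r-1$; in the second, to an odd element of degree $c-1$ (since $\sigma^k(a)=h_k(a)$ and $h_k\in I_{h,c}$ for $k\ge c$) — so $a$ becomes finite-dimensional in each, hence in $\Lambda_{(c^r)}$, which sits inside the product $\ZZ[e_1,\dots,e_{r-1}]\times\ZZ[h_1,\dots,h_{c-1}]$; a subring of a domain is a domain, giving primality. I must still check the fibre product map $\Lambda_{(c^r)}\to \Lambda_{r-1}\times \Lambda_{c-1}$ is injective, i.e.\ $I_{e,r}\cap I_{h,c}$ is exactly $I_{(c^r)}$ and $I_{e,r}+I_{h,c}$ is the ideal cutting out the common quotient — the first is Corollary~\ref{Imn}, the second follows since $\Lambda_{r-1}/(\text{image of }h_{\ge c})$ has the expected basis.

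For the general partition $\lambda=(\lambda_1,\dots,\lambda_r)$ I would observe that the staircase shape $\lambda$ is contained in a union of ``maximal'' rectangles inscribed appropriately, so that the complementary set $\{\pi:\pi\not\supseteq\lambda\}$ decomposes accordingly; more usefully, I would find rectangles $\beta_i$ with $I_\lambda=\bigcap_i I_{\beta_i}$. Indeed $\pi\supseteq\lambda$ fails iff $\pi_i<\lambda_i$ for some $i$, and the condition ``$\pi_i<\lambda_i$'', together with $\pi$ being a partition, means $\pi$ has at most $i-1$ rows of length $\ge\lambda_i$; packaging this, $I_\lambda=\bigcap_{i=1}^{r} I_{(\lambda_i^{\,i})}$ (the $i\times\lambda_i$ rectangle), which one verifies on Schur-basis elements exactly as in the $\mu=(2,1)$ case. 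Then $\Lambda_\lambda=\Lambda/\bigcap_i I_{\beta_i}$ embeds in $\prod_i \Lambda/I_{\beta_i}=\prod_i\Lambda_{\beta_i}$, each factor being (by the rectangular case just proved) a subring of a polynomial ring in which $a$ is finite-dimensional; hence $\Lambda_\lambda$ is a subring of a product of polynomial rings in which $a$ is finite-dimensional, and in particular reduced. The main obstacle I anticipate is the two bookkeeping verifications on Schur-polynomial bases — that $I_\lambda$ is $\lambda$-stable (needing the LR-coefficient argument for $\lambda^n$, not just Pieri for $h_p$), and that $I_\lambda=\bigcap_i I_{(\lambda_i^{\,i})}$ with the right identification of the common quotients so that the product embedding is genuinely injective; everything else is formal once these combinatorial facts are in hand.
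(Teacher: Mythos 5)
Your reduction of a general $\lambda$ to rectangles via $I_\lambda=\bigcap_i I_{(\lambda_i^{\,i})}$ is exactly the paper's strategy, but your treatment of the rectangular case itself rests on a misreading of Corollary \ref{Imn}, and this is fatal. That corollary identifies $I_{h,c}\cap I_{e,r}$ with $I_{(c,1^{r-1})}$, the ideal of the \emph{hook} $(c,1^{r-1})$, not of the $r\times c$ rectangle: a partition $\pi$ contains $(c^r)$ iff $\pi_r\ge c$, whereas $s_\pi\in I_{h,c}\cap I_{e,r}$ iff $\pi_1\ge c$ and $\pi$ has at least $r$ rows, and already $\pi=(c,1^{r-1})$ separates the two conditions. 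Consequently your fibre product of $\ZZ[e_1,\dots,e_{r-1}]$ and $\ZZ[h_1,\dots,h_{c-1}]$ describes the hook algebra $\Lambda_{(c,1^{r-1})}$ (Example \ref{Lambda21} is the case $(2,1)$), not $\Lambda_{(c^r)}$; and even if it did apply, such a fibre product is not a domain (cf.\ $\Lambda_{(2,1)}\cong\ZZ[x,y]/(y^2-x^2y)$, where $y(y-x^2)=0$), so primality could not follow from it. The construction your proposal is missing is the single embedding the paper uses: for $\beta=((m+1)^{n+1})$ one takes the $\lambda$-ring map $f:\Lambda\to\Lambda_m\otimes\Lambda_{-n}\cong\ZZ[a_1,\dots,a_m,b_1,\dots,b_n]$ sending $e_1$ to $a+b$, the sum of an even and an odd element, and proves $\ker f=I_\beta$. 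The inclusion $I_\beta\subseteq\ker f$ does follow from the Littlewood--Richardson expansion $s_\pi(a+b)=\sum c_\pi^{\mu\nu}s_\mu(a)s_\nu(b)$ (every surviving term needs $\mu$ of length at most $m$ and $\nu_1\le n$), but the reverse inclusion requires a genuine leading-term argument in reverse lexicographic order, which your proposal does not anticipate. This one kernel computation delivers everything at once: $I_\beta$ is a $\lambda$-ideal because it is the kernel of a $\lambda$-ring map, it is prime because the target is a polynomial ring, and $a$ becomes finite-dimensional there.

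A second, smaller gap: your direct verification that $I_\lambda$ is a $\lambda$-ideal requires that $\lambda^n(s_\mu(x))\in I_\lambda$ for $\mu\supseteq\lambda$ and $n\ge1$, which you describe as ``Littlewood--Richardson bookkeeping.'' But the operation $\lambda^n\circ s_\mu$ corresponds to the \emph{plethysm} $e_n[s_\mu]$, whose Schur expansion is not governed by the LR rule (the coproduct formula $\Delta(e_n)=\sum e_i\otimes e_{n-i}$ controls $\lambda^n(x+y)$, not $\lambda^n(\phi(x))$). The statement you need --- every Schur constituent of $e_n[s_\mu]$ contains $\mu$ --- is true, and can be recovered from Schur-positivity of plethysms of Schur polynomials together with the LR fact that every constituent of $s_\mu^{\,n}$ contains $\mu$, but it is not the argument you sketched. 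The paper's kernel description of $I_\beta$, combined with $I_\lambda=\bigcap I_{\beta_i}$ and Lemma \ref{idealI-lambda}, makes this plethysm verification unnecessary.
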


Proposition \ref{sf-ideal} verifies Conjecture 3.9 of \cite{KKT}.

\begin{proof}
Fix a rectangular partition $\beta=((m+1)^{n+1})=(m+1,...,m+1)$, and
set $a=\sum_1^m a_i$, $b=\sum_1^n b_j$.
Consider the universal $\lambda$-ring map
\[
f:\Lambda\to\Lambda_m\otimes\Lambda_{-n}\cong\ZZ[a_1,...,a_m,b_1,...,b_n]
\]
sending $e_1$ to the finite-dimensional element $a+b$ 
(see Definition \ref{free-even}). 
We claim that the kernel of $f$ is $I_\beta$. 
Since $\ker f$ is a $\lambda$-ideal, this proves that $I_\beta$ 
is a $\lambda$-ideal and that $\Lambda/I_\beta$ embeds into
the polynomial ring $\ZZ[a_1,...,a_m,b_1,...,b_n]$.
%
Since any partition $\lambda$ can be written as a union of rectangular 
partitions $\beta_i$, Lemma \ref{idealI-lambda} implies that
$I_\lambda=\cap I_{\beta_i}$ is also a $\lambda$-ideal.

By the Littlewood-Richardson rule \ref{LRrule}, 
$f(s_\pi)=s_\pi(a+b)=\sum c_\pi^{\mu\nu}s_\mu(a)s_{\nu}(b)$, where 
$\mu$ and $\nu$ run over all partitions such that
$\pi$ is obtained from $\mu$ by concatenating $\nu$ in a certain way.
We may additionally restrict the sum to $\mu$ with at most $m$ rows and $\nu$
with $\nu_1\le m$, since otherwise $s_\mu(a)=0$ or $s_\nu(b)=0$.
By Proposition \ref{Jacobi-Trudi}, the $s_\mu(a)$
run over a basis of $\Lambda_m$ and the $s_\nu(b)$ run over a basis of
$\Lambda_{-n}$.

If $\pi$ contains $\beta$ then $f(s_\pi)=s_\pi(a+b)=0$, because in 
every term of the above expansion, either 
the length of $\mu$ is $>m$ or else $\nu_1>n$.
Thus $I_\beta\subseteq\ker f$.

For the converse, we use the reverse lexicographical ordering of
partitions \cite[p.\,5]{MacD}. For each $\pi$ not containing $\beta$, 
set $\mu_\pi=(\pi_1,\dots,\pi_m)$; this is the maximal $\mu$ (for this
ordering) such that $c^{\mu\nu}_\pi\ne0$ (with $\nu_\pi=\pi-\mu_\pi$).
Given $t=\sum_{\beta\not\subseteq\pi} d_\pi s_\pi$, 
choose $\mu$ maximal subject to $\mu=\mu_\pi$ for some $\pi$ with
$d_\pi\ne0$; choose $\pi$ maximal with $\mu=\mu_\pi$ and $d_\pi\ne0$,
and set $\nu=\nu_\pi$. 
Then the coefficient of $s_\mu(a)s_\nu(b)$ in $f(t)$ is $d_\pi\ne0$.
Thus $\ker f\subseteq I_\beta$.
%
\end{proof}


\begin{Corollary}\label{lambda22}
$\Lambda_{(2,2)}$ is the subring $\Z+x\Z[a,b]$
of $\Z[a,b]$, where $x=a+b$.
\end{Corollary}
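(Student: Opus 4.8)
The plan is to obtain $\Lambda_{(2,2)}$ as the image of the explicit polynomial model constructed in the proof of Proposition~\ref{sf-ideal}. The partition $(2,2)$ is the rectangular partition $\beta=\bigl((m+1)^{n+1}\bigr)$ with $m=n=1$, so that proof supplies a universal $\lambda$-ring map
\[
f\colon\Lambda\longrightarrow \Lambda_1\otimes\Lambda_{-1}\cong\Z[a,b],\qquad f(e_1)=a+b=x,
\]
where $a=a_1$ is a line element and $b=b_1$ is odd of degree~$1$, and its main assertion is that $\ker f=I_{(2,2)}$. Hence $\Lambda_{(2,2)}=\Lambda/I_{(2,2)}$ is identified with the sub-$\lambda$-ring $f(\Lambda)\subseteq\Z[a,b]$, and the whole problem reduces to computing that image.

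First I would evaluate $f$ on the ring generators $e_n$ of $\Lambda$ by applying it to the generating function $E(t)=\lambda_t(e_1)=\sum_n e_n t^n$. Since $a$ is a line element we have $\lambda_t(a)=1+at$, and since $b$ is odd of degree~$1$ we have $\lambda_t(b)=\sigma_{-t}(b)^{-1}=(1-bt)^{-1}=\sum_{n\ge0}b^n t^n$; therefore
\[
f\bigl(E(t)\bigr)=\lambda_t(a+b)=\lambda_t(a)\,\lambda_t(b)=(1+at)\sum_{n\ge0}b^n t^n=1+\sum_{n\ge1}(a+b)\,b^{n-1}\,t^n .
\]
Comparing coefficients of $t^n$ gives $f(e_n)=x\,b^{n-1}$ for every $n\ge1$. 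Consequently $f(\Lambda)$ is the $\Z$-subalgebra of $\Z[a,b]$ generated by the elements $x,\,xb,\,xb^2,\dots$.

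It then remains to identify this subalgebra with $\Z+x\,\Z[a,b]$. The inclusion ``$\subseteq$'' is immediate: each generator $xb^{n-1}$ lies in the ideal $x\,\Z[a,b]$, and $\Z+x\,\Z[a,b]$ is a subring of $\Z[a,b]$. For ``$\supseteq$'', I would use that $\Z[a,b]=\Z[x,b]$ (as $a=x-b$), so that $x\,\Z[a,b]$ is spanned over $\Z$ by the monomials $x^{k}b^{j}$ with $k\ge1,\ j\ge0$; each such monomial is realized as the product of $k-1$ copies of the generator $x$ with one copy of the generator $xb^{j}$, hence lies in the subalgebra, which therefore contains $\Z\cdot 1+x\,\Z[a,b]$. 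The one point deserving a moment's care --- and the closest thing to an obstacle here --- is this last bookkeeping: it is exactly because the generators $xb^{j}$ are available for \emph{all} $j\ge0$ that every power of $b$ (carrying a factor of $x$) is reachable, so that in the image no constraint beyond divisibility of the non-constant part by $x$ survives.
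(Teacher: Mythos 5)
Your proof is correct and follows essentially the same route as the paper: both invoke Proposition~\ref{sf-ideal} to realize $\Lambda_{(2,2)}$ as the image of the universal map into $\Z[a,b]$, compute $\lambda^{n}(x)=xb^{n-1}$ (you via the generating function $\lambda_t(a+b)=(1+at)(1-bt)^{-1}$, the paper via the addition formula term by term), and then identify $\Z[x,xb,xb^2,\dots]$ with $\Z+x\Z[a,b]$. The only difference is that you spell out the last identification, which the paper leaves as an assertion.
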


\begin{proof} 
By Proposition \ref{sf-ideal}, $\Lambda_{(2,2)}$ is the subring of
$\Z[a,b]$ generated $x=a+b$ and the $\lambda^n(x)$. Since
\[
\lambda^{n+1}(x)=a\lambda^n(b)+\lambda^{n+1}(b)=ab^n+b^{n+1}=xb^n,
\]
we have $\Lambda_{(2,2)}=\Z[x,xb,xb^2,\dots,xb^n,\dots]=\Z+x\Z[a,b]$.
\end{proof}

\begin{subrem}
The ring $\Lambda_{(2,2)}$ was studied in \cite[3.8]{KKT}, where it was shown 
that $\Lambda_{(2,2)}$ embeds into $\Z[x,y]$ sending $e_n$ to $xy^{n-1}$.
This is the same as the embedding 
in Corollary \ref{lambda22},
up to the change of coordinates $(x,y)\!=\!(a+b,b)$.
\end{subrem}

\begin{Example}\label{lambda2-3}
Let $I$ be the ideal  of $\Lambda_{(2,2)}$ generated by the $\lambda^{2i}(x)$
($i>0$) and set $R=\Lambda_{(2,2)}/I$. Then $R$ is a $\lambda$-ring 
and $x$ is a non-nilpotent element such that
$\lambda^{2i}(x)=0$ but $\lambda^{2i+1}(x)\ne0$. In particular,
$\lambda^2(x)=0$ yet $\lambda^3(x)\ne0$.

To see this, we use the embedding of Corollary \ref{lambda22} to see that 
$I$ contains $x(xb^{2i-1})$ and $(xb)(xb^{2i-1})$ and hence
the ideal $J$ of $\Z[a,b]$ generated by $x^2b$.
In fact,  $I$ is additively generated by $J$ and the $\{xb^{2i-1}\}$.
It follows that $R$ has basis $\{1,x^n, xb^{2n}|n\ge 1\}$. 
Since $\lambda^n(\lambda^{2i}(x))$ is equivalent 
to $\lambda^{2in}(x)=xb^{2in-1}$ modulo $J$ (by \ref{scholium}), 
it lies in $I$. Hence $I$ is a $\lambda$-ideal of $\Lambda_{(2,2)}$.

There is no $\lambda$-ring extension $R\subset R'$ in which 
$x=\ell_1-\ell_2$ for line elements $\ell_i$, 
because we would have $\lambda^3(x)=\lambda^3(x+\ell_2)=0$.
On the other hand, there is a $\lambda$-ring extension $R\subset R'$
in which $x=\ell_1+\ell_2-\ell_3-\ell_4$ for line elements $\ell_i$.
\end{Example}

\begin{Lemma}\label{Lx+y}
If $x$ and $y$ are Schur-finite, so is $x+y$.
\end{Lemma}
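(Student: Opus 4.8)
The plan is to use the Littlewood-Richardson rule in the form of Corollary \ref{LRrule}. Suppose $x$ has bound $\lambda$ and $y$ has bound $\mu$, meaning $I_\lambda$ annihilates $x$ and $I_\mu$ annihilates $y$. I want to produce a partition $\nu$ such that $I_\nu$ annihilates $x+y$; that is, $s_\pi(x+y)=0$ for every $\pi$ containing $\nu$. By Corollary \ref{LRrule}, $s_\pi(x+y)=\sum c_\pi^{\alpha\beta}s_\alpha(x)s_\beta(y)$, where the sum runs over pairs $(\alpha,\beta)$ with $\alpha\subseteq\pi$ and $\pi/\alpha$ a skew shape of content $\beta$. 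Since $s_\alpha(x)=0$ whenever $\alpha$ contains $\lambda$ and $s_\beta(y)=0$ whenever $\beta$ contains $\mu$, it suffices to choose $\nu$ large enough that every $\pi$ containing $\nu$ forces, in each nonzero term of the expansion, either $\lambda\subseteq\alpha$ or $\mu\subseteq\beta$.

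The key combinatorial step is thus: given $\lambda$ with at most $r$ rows and $\mu$ with at most $s$ rows, find $\nu$ so that whenever $\pi\supseteq\nu$ and $\pi$ is obtained from $\alpha$ by adjoining a skew tableau of shape content $\beta$ (i.e.\ $c_\pi^{\alpha\beta}\neq 0$), we cannot have both $\alpha\not\supseteq\lambda$ and $\beta\not\supseteq\mu$. A clean choice is a sufficiently large rectangle: take $\nu=(N^M)$ where $M=r+s$ and $N$ is chosen larger than $\lambda_1+\mu_1$ (more precisely, I will check $N = \lambda_1+\mu_1$ or $\lambda_1+\mu_1-1$ suffices, but a safe bound is fine for a plan). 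The point is that if $\alpha$ omits the condition $\lambda\subseteq\alpha$ — say $\alpha_i<\lambda_i$ for some $i\le r$, or $\alpha$ has fewer than $r$ rows — then $\alpha$ is "small" in row $i$, and the complementary skew shape $\pi/\alpha$ inside the big rectangle $\pi$ must be correspondingly "large"; since the content of a horizontal-strip-free skew shape of $\pi/\alpha$ is $\beta$, one shows $\beta$ must then contain $\mu$. The bookkeeping is: the rectangle $\pi$ has $M=r+s$ rows each of length $\ge N$; $\alpha$ failing to contain $\lambda$ in one of its first $r$ rows, together with $\pi/\alpha$ being a legitimate LR skew shape, pins down enough boxes of $\beta$ in its first $s$ rows to guarantee $\mu\subseteq\beta$.

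I expect the main obstacle to be exactly this combinatorial estimate — making the pigeonhole on rows and columns of the ambient rectangle precise enough to conclude $\lambda\subseteq\alpha$ or $\mu\subseteq\beta$ in every term. One has to be careful because $c_\pi^{\alpha\beta}\neq 0$ only requires $|\alpha|+|\beta|=|\pi|$ and $\alpha\subseteq\pi$ and the skew shape $\pi/\alpha$ admitting an LR filling of content $\beta$; the constraint linking $\beta$ to the geometry of $\pi/\alpha$ (namely $\beta'_j$ equals the number of columns of $\pi/\alpha$ of a given co-length, etc.) is what carries the argument. An alternative, cleaner route that sidesteps the explicit estimate: use Proposition \ref{sf-ideal} to get $\lambda$-ring maps $\Lambda_\lambda\to R$ and $\Lambda_\mu\to R$ with $a\mapsto x$, $a\mapsto y$; since each $\Lambda_\bullet$ embeds in a product of polynomial rings in which $a$ is finite-dimensional, one reduces to showing the \emph{universal} statement — that in $\Lambda_\lambda\otimes\Lambda_\mu$ the element $a\otimes 1+1\otimes a$ is Schur-finite — and there one can compute directly, since $a\otimes 1+1\otimes a$ maps into a product of polynomial rings where it is a sum of finite-dimensional elements (by Corollary \ref{line-decompose}), hence finite-dimensional, hence Schur-finite by the inclusion in Lemma \ref{Lx+y}'s predecessors. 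I would open with this reduction, then either invoke that finite-dimensional elements are Schur-finite (to be proved just after, or already available) or carry out the rectangle estimate above; whichever the authors have set up, the reduction to the universal ring $\Lambda_\lambda\otimes\Lambda_\mu$ is the organizing idea, and the existence of the bound $\nu$ for $x+y$ follows by transporting a bound for $a\otimes 1+1\otimes a$ back along the map $\Lambda_\lambda\otimes\Lambda_\mu\to R$.
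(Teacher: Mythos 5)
Your primary argument --- expand $s_\pi(x+y)$ by the Littlewood--Richardson rule (Corollary \ref{LRrule}) and choose a partition (the paper's $\pi_0$, your large rectangle $(N^{r+s})$) so large that every $\pi$ containing it forces $\alpha\supseteq\lambda$ or $\beta\supseteq\mu$ in each nonzero term --- is exactly the paper's proof; the paper merely asserts the existence of such a $\pi_0$, whereas your explicit rectangle with $N\ge\lambda_1+\mu_1$ does work (e.g.\ via the standard necessary condition $\pi_{i+j-1}\le\alpha_i+\beta_j$ whenever $c^{\alpha\beta}_\pi\ne0$). Your alternative route through $\Lambda_\lambda\otimes\Lambda_\mu$ is not what the paper does and, as you yourself flag, would be circular as stated, since ``finite-dimensional implies Schur-finite'' (Corollary \ref{finite-is-Schur}) is deduced in the paper from this very lemma.
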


\begin{proof}
Given a partition $\lambda$, there is a partition $\pi_0$ such that 
whenever $\pi$ contains $\pi_0$, one of the partitions $\mu$ and $\nu$ 
appearing in the Littlewood-Richardson rule \ref{LRrule} must contain 
$\lambda$.
If $x$ and $y$ are both killed by all Schur polynomials indexed by
partitions containing $\lambda$, we must therefore have $s_\pi(x+y)=0$.
\end{proof}

\begin{Corollary}\label{finite-is-Schur}
Finite-dimensional elements are Schur-finite.
\end{Corollary}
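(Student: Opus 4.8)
The plan is to reduce the statement to the structural facts already established about line elements, even elements, and Schur-finiteness. Specifically, I would show that a line element is Schur-finite, that an even element is Schur-finite, that an odd element is Schur-finite, and then invoke Lemma \ref{Lx+y} together with the description of finite-dimensional elements as sums of an even and an odd element.

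First I would handle the case of a line element $\ell$, i.e.\ one with $\lambda^n(\ell)=0$ for all $n>1$. Under the universal map $u_\ell:\Lambda\to R$ we have $e_1\mapsto\ell$ and $e_n\mapsto 0$ for $n\ge 2$, so $u_\ell$ factors through $\Lambda/I_{e,2}=\Lambda_1=\ZZ[e_1]$. By Proposition \ref{Jacobi-Trudi}, $I_{e,2}=I_{(1,1)}$ is spanned by the Schur polynomials $s_\pi$ with at least two rows, so $s_\pi(\ell)=0$ whenever $\pi$ contains $(1,1)$; hence $\ell$ is Schur-finite with bound $(1,1)$. Next, an even element $a$ of degree $m$ satisfies $\lambda^k(a)=0$ for $k>m$, so $u_a$ factors through $\Lambda/I_{e,m+1}=\Lambda_m$, and by Proposition \ref{Jacobi-Trudi} this means $s_\pi(a)=0$ for every $\pi$ with more than $m$ rows, i.e.\ $a$ is Schur-finite with bound $(1^{m+1})$. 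Dually, an odd element $b$ of degree $n$ has $\sigma^k(b)=0$ for $k>n$; since $\sigma^k$ corresponds to $h_k\in\Lambda$, the map $u_b$ factors through $\Lambda/I_{h,n+1}=\Lambda_{-n}$, so by the $I_{h,n+1}=I_{(n+1)}$ part of Proposition \ref{Jacobi-Trudi} we get $s_\pi(b)=0$ for every $\pi$ with $\pi_1\ge n+1$; thus $b$ is Schur-finite with bound $(n+1)$. (One could instead deduce the odd case from the even case using $\sigma_{-t}(x)=\lambda_t(-x)$ and the antipode, but going directly through $I_{h,n+1}$ is cleaner.)

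Finally, I would assemble the pieces. By definition, a finite-dimensional element $x$ of $R$ can be written as $x=a+b$ with $a$ even and $b$ odd (the sum of an even and an odd element). By the two previous paragraphs, both $a$ and $b$ are Schur-finite, so Lemma \ref{Lx+y} gives that $x=a+b$ is Schur-finite. This completes the proof.

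The only point requiring care is making sure that "even of degree $m$" really does yield a factorization through $\Lambda_m$ and that one then reads off a bound from Proposition \ref{Jacobi-Trudi}; this is routine given the identification of the universal $\lambda$-ring with $\Lambda$ (with $a_i=e_i$) and the descriptions $\Lambda/I_{e,n}=\Lambda_{n-1}$, $\Lambda/I_{h,n}=\Lambda_{-(n-1)}$ recorded just before and in Proposition \ref{Jacobi-Trudi}. I do not anticipate a genuine obstacle here: the corollary is essentially a bookkeeping consequence of Lemma \ref{Lx+y} and the ideal-theoretic description of the $I_\lambda$, and the entire argument can be written in a few lines.
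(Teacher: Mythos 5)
Your proof is correct and follows exactly the paper's route: the paper's own (very terse) proof likewise deduces the corollary from Proposition \ref{Jacobi-Trudi} (which gives Schur-finiteness of even and odd elements, with the bounds $(1^{m+1})$ and $(n+1)$ you identify) combined with Lemma \ref{Lx+y} and the decomposition of a finite-dimensional element as a sum of an even and an odd element. Your write-up just makes explicit the bookkeeping that the paper leaves implicit.
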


\begin{proof}
Proposition \ref{Jacobi-Trudi} shows that even and odd elements
are Schur-finite.
\end{proof}

\begin{Example}
If $R$ is a binomial ring containing $\QQ$, then
every Schur-finite element is finite-dimensional.
This follows from Example \ref{binom} and  \cite[Ex.\,I.3.4]{MacD}, which says 
that $s_\pi(r)$ is a rational number times a product of terms $r-c(x)$,
where the $c(x)$ are integers.
\end{Example}

\begin{Example}\label{ex:sf-not-finite}
The universal element $x$ of the $\lambda$-ring $\Lambda_{(2,1)}$ is
Schur-finite but not finite-dimensional. To see this, recall from
Example \ref{Lambda21} that $\Lambda_{(2,1)}\cong\Z[x,y]/(y^2-x^2y)$.
Because $\Lambda_{(2,1)}$ is graded, if $x$ were finite-dimensional
it would be the sum of an even and odd element in the degree~1 part
$\{nx\}$ of $\Lambda_{(2,1)}$.
If $n\in\NN$, $nx$ cannot be even because the second 
coordinate of $\lambda^k(nx)$ is $\chose{-n}{k}\, b^k$  by \ref{free-even}.
And $nx$ cannot be odd, because the first coordinate of $\sigma^k(nx)$ is
$(-1)^k\chose{-n}{k}\,a^k$.
\end{Example}

\begin{Lemma}\label{schur-descent}
Let $R\subset R'$ be an inclusion of $\lambda$-rings.
If $x\in R$ then $x$ is Schur-finite in $R'$, 
if and only if $x$ is Schur-finite in $R$.
In particular, if $x$ is finite-dimensional in $R'$, 
then $x$ is Schur-finite in $R$.
\end{Lemma}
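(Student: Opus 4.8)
The plan is to observe that Schur-finiteness of $x$ is an intrinsic property, depending only on the image of the universal map $u_x\colon\Lambda\to R$, and to exploit that the diagram $\Lambda\to R\hookrightarrow R'$ factors $u_x^{R'}$ through $u_x^R$. First I would note that, by definition, $x$ is Schur-finite in $R$ precisely when there is a partition $\lambda$ with $I_\lambda\subseteq\ker(u_x^R)$, and likewise Schur-finite in $R'$ when $I_\lambda\subseteq\ker(u_x^{R'})$ for some $\lambda$. Since $R\subset R'$ is an inclusion, $u_x^{R'}$ is the composite of $u_x^R$ with the inclusion, so $\ker(u_x^R)=\ker(u_x^{R'})$. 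Hence the two conditions on $\lambda$ are literally the same condition, and the biconditional follows immediately. Equivalently, and perhaps more transparently: $s_\mu(x)$ is the element $u_x(s_\mu)$ of $R$, and this element vanishes in $R$ if and only if it vanishes in $R'$ because $R\to R'$ is injective; so $I_\lambda$ annihilates $x$ in $R$ iff it annihilates $x$ in $R'$.

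For the final sentence, suppose $x$ is finite-dimensional in $R'$. By Corollary \ref{finite-is-Schur}, $x$ is then Schur-finite in $R'$, and by the biconditional just established, $x$ is Schur-finite in $R$. This is exactly the ``in particular'' claim.

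I expect no real obstacle here; the only point requiring any care is the completely formal verification that the universal map respects the inclusion, i.e.\ that the element $u_x(s_\mu)\in R$ maps to $u_x(s_\mu)\in R'$ under $R\hookrightarrow R'$ — which is immediate from the uniqueness in the universal property of $\Lambda$ applied to the $\lambda$-ring $R'$ together with the element $x\in R\subseteq R'$. So the proof is genuinely a two-line argument: track where $s_\mu(x)$ lives, and use injectivity of $R\to R'$ to transfer the vanishing of those elements between the two rings in both directions.
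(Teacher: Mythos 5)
Your argument is correct and is essentially the paper's own proof: both reduce to the observation that $s_\mu(x)$ is a single element computed identically in $R$ and $R'$, so injectivity of $R\hookrightarrow R'$ transfers its vanishing in both directions, and both then invoke Corollary \ref{finite-is-Schur} for the final sentence. Your phrasing via the kernel of the universal map $u_x\colon\Lambda\to R$ is just a slightly more formal packaging of the same idea.
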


\begin{proof}
Since $s_\pi(x)$ may be computed in either $R$ or $R'$,
the set of partitions $\pi$ for which $s_\pi(x)=0$ is the same for $R$ and $R'$.
The final assertion follows from Lemma \ref{finite-is-Schur}.
\end{proof}

\begin{Lemma}\label{L-x}
If $\pi$ is a partition of $n$, $s_{\pi'}(-x)=(-1)^n s_{\pi}(x)$.
\end{Lemma}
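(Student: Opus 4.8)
The plan is to reduce the identity to a statement purely about the ring of symmetric functions $\Lambda$, viewed as the ring of natural operations on $\lambda$-rings. Because $s_{\pi'}(-x)$ and $(-1)^n s_\pi(x)$ are both values of natural operations applied to $x$, it suffices to prove the corresponding identity of elements of $\Lambda$ under the universal substitution, namely that the operation $y \mapsto s_{\pi'}(-y)$ equals the operation $y \mapsto (-1)^n s_\pi(y)$; concretely, writing $\omega \colon \Lambda \to \Lambda$ for the ring map with $\omega(e_i) = \sigma^i$-as-operation (i.e. the map sending the universal element $a$ to $-a$), I want $\omega(s_{\pi'}) = (-1)^n s_\pi$ in $\Lambda$. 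The key fact is that this map $\omega$ is exactly the antipode $S$ of the Hopf algebra $\Lambda$ up to sign on each graded piece: more precisely, since $\sigma^n(x) = (-1)^n\lambda^n(-x)$ corresponds to $h_n$ and $S(e_n) = h_n$, the operation $x \mapsto \lambda^n(-x)$ corresponds to $(-1)^n S(e_n)$, and more generally the substitution $x \mapsto -x$ acts on a homogeneous element $f$ of degree $n$ by $f \mapsto (-1)^n S(f)$.

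So the first step is to record precisely this claim: if $f \in \Lambda$ is homogeneous of degree $n$, then the natural operation $x \mapsto f(-x)$ equals $x \mapsto (-1)^n (Sf)(x)$. This follows because both sides are (graded) ring homomorphisms $\Lambda \to \Lambda$ agreeing on the generators $e_n$ — on the left, $e_n \mapsto \lambda^n(-x)$, which has generating function $\lambda_t(-x) = \sigma_{-t}(x) = E(-t)^{-1}\big|_{x}$; on the right, $(-1)^n S(e_n) = (-1)^n h_n$, with generating function $\sum (-1)^n h_n t^n = H(-t) = E(t)^{-1}$ — and these match after the sign bookkeeping coming from the degree. The second step is then to invoke the Jacobi–Trudi identities, already stated in the excerpt just before Proposition \ref{Jacobi-Trudi}, which assert precisely that the antipode $S$ interchanges $s_\pi$ and $s_{\pi'}$. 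Combining, for $\pi$ a partition of $n$ we get that $x \mapsto s_{\pi'}(-x)$ equals $x \mapsto (-1)^n (S s_{\pi'})(x) = (-1)^n s_\pi(x)$, which is the assertion, and specializing the universal $x$ to an arbitrary element of an arbitrary $\lambda$-ring $R$ gives the lemma in full generality.

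The main obstacle is entirely bookkeeping: getting the sign $(-1)^n$ to land correctly, i.e. being careful that $S$ is a ring homomorphism (not an anti-homomorphism, which would also matter for products of $e_i$'s inside a determinant) and that the substitution $x \mapsto -x$ genuinely agrees with $(-1)^{\deg} S$ rather than $S$ itself or $(-1)^{\deg}$ alone. There is no hard mathematical content beyond the two cited facts — the Jacobi–Trudi/antipode relation and the generating-function identity $H(t)E(-t)=1$ already used in the excerpt — so the proof will be short; the only care needed is to phrase the degree-graded sign cleanly so that the determinantal form of $s_\pi$ is never directly manipulated.
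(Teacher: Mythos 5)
Your proof is correct and follows essentially the same route as the paper: both arguments write $s_{\pi'}$ as the antipode applied to $s_\pi$ (via Jacobi--Trudi), observe that the substitution $x\mapsto -x$ sends $e_i$ to $(-1)^i h_i$ as operations, and extract the overall sign $(-1)^n$ from homogeneity. Your packaging of this as the ring-homomorphism identity ``$x\mapsto f(-x)$ equals $(-1)^{\deg f}(Sf)$'' is just a cleaner restatement of the paper's substitution argument, with the signs handled correctly.
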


\begin{proof}
Write $s_\pi$ as a homogeneous polynomial $f(e_1,e_2,...)$
of degree $n$. Applying the antipode $S$ in $\Lambda$, we have $s_{\pi'}=f(h_1,h_2,...)$.
It follows that $s_{\pi'}(-x)=f(\sigma^1,\sigma^2,...)(-x)$.
Since $\sigma^i(-x)=(-1)^i\lambda^i(x)$, and $f$ is homogeneous, we have
\[
s_{\pi'}(-x)=f(-\lambda^1,+\lambda^2,...)(x) =
(-1)^n f(\lambda^1,\lambda^2,...)(x) = s_\pi(x).\qedhere
\]
\end{proof}

\begin{subrem}
If $a$ is a line element then $s_\pi(ax)=a^n s_\pi(x)$.
From Lemma \ref{schur-descent}, we have
$s_\pi(-ax)=(-a)^n s_{\pi'}(x)$.
\end{subrem}

\begin{Theorem}
The Schur-finite elements form a subring of any $\lambda$-ring,
containing the subring of finite-dimensional elements.
\end{Theorem}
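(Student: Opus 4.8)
The plan is to verify the two subring axioms — closure under addition and closure under multiplication — and the containment, citing the structural results already established. Closure under addition is immediate from Lemma \ref{Lx+y}, and the containment of finite-dimensional elements is Corollary \ref{finite-is-Schur}, so the only real work is closure under multiplication: if $x$ and $y$ are Schur-finite, then so is $xy$. (Closure under negation is a special case of multiplication, since $-1$ is a line element and $s_{\pi'}(-x)=(-1)^n s_\pi(x)$ by Lemma \ref{L-x}, so $x$ Schur-finite implies $-x$ Schur-finite; this also follows by combining Lemma \ref{L-x} with the observation that $\{I_\lambda\}$ is closed under finite intersection.)

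For the product, I would first reduce to the universal case. Suppose $x$ is Schur-finite with bound $\lambda$ and $y$ is Schur-finite with bound $\mu$; by Proposition \ref{sf-ideal} there are $\lambda$-ring maps $f:\Lambda_\lambda\to R$ and $g:\Lambda_\mu\to R$ carrying the universal elements $a,a'$ to $x,y$ respectively. Form the tensor product $\Lambda_\lambda\otimes\Lambda_\mu$ in the category of $\lambda$-rings (which exists since that category has all colimits, as noted in the excerpt after Scholium \ref{scholium}); the maps $f,g$ induce a $\lambda$-ring map $\Lambda_\lambda\otimes\Lambda_\mu\to R$ sending $a\otimes1\mapsto x$ and $1\otimes a'\mapsto y$, hence $(a\otimes1)(1\otimes a')\mapsto xy$. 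By Lemma \ref{schur-descent} it suffices to show $(a\otimes1)(1\otimes a')$ is Schur-finite in $\Lambda_\lambda\otimes\Lambda_\mu$, so we are reduced to a statement purely about universal $\lambda$-rings. Better still, by the last part of Proposition \ref{sf-ideal} each of $\Lambda_\lambda,\Lambda_\mu$ embeds in a (finite) product of polynomial rings in which the universal element becomes finite-dimensional; taking the tensor product of these embeddings and invoking Lemma \ref{schur-descent} once more, it suffices to prove: \emph{the product of two finite-dimensional elements is Schur-finite.}

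So the crux is to bound $s_\pi(xy)$ when $x=\sum\ell_i$, $y=\sum m_j$ (or differences thereof) are sums of finitely many line elements — after a further faithfully flat extension provided by the Splitting Principle \ref{finite-splitting} and Corollary \ref{line-decompose}, again legitimate by Lemma \ref{schur-descent}. Here $xy=\sum_{i,j}\ell_i m_j$ is a $\ZZ$-linear combination of the $mn$ line elements $\ell_i m_j$ (line elements multiply to line elements: $\lambda^k$ of a product of line elements is easily computed), so $xy$ is finite-dimensional, hence Schur-finite by Corollary \ref{finite-is-Schur}. The subtlety I expect is purely bookkeeping: making sure the signs in the difference case are handled — one writes $x=x_+-x_-$, $y=y_+-y_-$ with each $x_\pm,y_\pm$ a sum of line elements, expands $xy=x_+y_++x_-y_- - x_+y_- - x_-y_+$, observes each of the four terms is a sum of line elements hence finite-dimensional, and concludes by Lemma \ref{Lx+y} (closure under addition) together with the negation remark above. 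The main obstacle, such as it is, is organizing this chain of reductions cleanly rather than any single hard computation; alternatively one could argue more directly that $\Lambda_\lambda\otimes\Lambda_\mu$ injects into a product of polynomial rings in which both $a\otimes1$ and $1\otimes a'$, and therefore their product, are finite-dimensional, avoiding the explicit splitting step.

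\begin{proof}
Closure under addition is Lemma \ref{Lx+y}, and finite-dimensional elements are Schur-finite by Corollary \ref{finite-is-Schur}, so it remains to prove that the product $xy$ of two Schur-finite elements $x,y\in R$ is Schur-finite; closure under negation then follows since $-1$ is a line element.

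By Proposition \ref{sf-ideal}, $x$ and $y$ determine $\lambda$-ring maps $\Lambda_\lambda\to R$ and $\Lambda_\mu\to R$ for suitable bounds $\lambda,\mu$, and by the last assertion of that proposition each of $\Lambda_\lambda$, $\Lambda_\mu$ embeds in a finite product of polynomial rings in which the universal element is finite-dimensional. The category of $\lambda$-rings has all colimits, so we may form $\Lambda_\lambda\otimes\Lambda_\mu$; the induced map to $R$ sends the product of the two universal elements to $xy$. Tensoring the two embeddings above and applying Lemma \ref{schur-descent} twice, we reduce to showing: in a polynomial ring, a product $uv$ of two finite-dimensional elements is Schur-finite.

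Applying the Splitting Principle \ref{finite-splitting} and Corollary \ref{line-decompose} (again legitimate by Lemma \ref{schur-descent}), we may pass to an extension in which $u=(\sum\ell_i)-(\sum\ell'_j)$ and $v=(\sum m_p)-(\sum m'_q)$ with all of $\ell_i,\ell'_j,m_p,m'_q$ line elements. A product of two line elements is a line element (as $\lambda^n$ of such a product vanishes for $n>1$), so expanding $uv$ into four terms, each term is a finite sum of line elements, hence finite-dimensional; by Corollary \ref{finite-is-Schur} each term is Schur-finite, and by Lemma \ref{Lx+y} (together with closure under negation) so is $uv$. This completes the proof.
\end{proof}
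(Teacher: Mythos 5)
Your proof is correct and follows essentially the same route as the paper's: reduce to the universal rings $\Lambda_\lambda$, embed them via Proposition \ref{sf-ideal} into (products of) polynomial rings in which every element is finite-dimensional, and descend with Lemma \ref{schur-descent}. Two small remarks. First, the justification of closure under negation inside your proof environment --- ``$-1$ is a line element'' --- is false: $\lambda_t(-1)=(1+t)^{-1}$, so $\lambda^2(-1)=1\neq 0$; the element $-1$ is odd of degree $1$, not a line element. Your conclusion survives, since $-1$ is finite-dimensional, hence Schur-finite, so negation still follows from closure under multiplication; and in any case Lemma \ref{L-x} (which you also cite) gives closure under negation directly, which is exactly what the paper does. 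Second, your final appeal to the Splitting Principle \ref{finite-splitting} and Corollary \ref{line-decompose} is unnecessary: the finite-dimensional elements already form a subring (as recorded in Section 1), so the product of two finite-dimensional elements is finite-dimensional with no need to split into line elements --- this is the shortcut the paper takes. Note also that the paper's proof additionally establishes that all $\lambda^i(x)$ are Schur-finite (by the same embedding-and-descent argument); this is not needed for the subring statement as written, but your reduction yields it for free and it is worth recording.
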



\begin{proof} The Schur-finite elements are closed under addition by
Lemma \ref{Lx+y}. Since $\pi$ contains $\lambda$ just in case 
$\pi'$ contains $\lambda'$, Lemma \ref{L-x} implies that $-x$ is 
Schur-finite whenever $x$ is. Hence the Schur-finite elements form a 
subgroup of $R$. It suffices to show that if $x$ and $y$ are Schur-finite
in $R$, then $xy$ and all $\lambda^i(x)$ are Schur-finite.

Let $x$ be Schur-finite with rectangular bound $\mu$, so there is a map from the
$\lambda$-ring $\Lambda_\mu$ to $R$ sending the generator
$e$ to $x$. Embed $\Lambda_\mu$ in $R'=\Z[a_1,\ldots,b_1,\ldots]$
using Proposition \ref{sf-ideal}. Since every element of $R'$ 
is finite-dimensional, $\lambda^n(e)$ is finite-dimensional
in $R'$, and hence Schur-finite in $\Lambda_\mu$ by Lemma \ref{schur-descent}.
It follows that the image $\lambda^n(x)$ of $\lambda^n(e)$ in $R$ is also Schur-finite.

Let $x$ and $y$ be Schur-finite with rectangular bounds $\mu$ and
$\nu$, and let $\Lambda_\mu\to R$ and $\Lambda_\nu\to R$ be the
$\lambda$-ring maps sending the generators $e_\mu$ and $e_\nu$ to $x$
and $y$.  Since the induced map $\Lambda_\mu\tens \Lambda_\nu\to R$
sends $e_\mu\tens e_\nu$ to $xy$, we only need to show that
$e_\mu\tens e_\nu$ is Schur-finite. But 
$\Lambda_\mu\tens \Lambda_\nu\subset \Z[a_1,\ldots,b_1,\ldots]\tens
\Z[a_1,\ldots,b_1,\ldots]$, and in the larger ring every element
is finite-dimensional, including the tensor product. By Lemma \ref{schur-descent},
$e_\mu\tens e_\nu$ is Schur-finite in $\Lambda_\mu\tens \Lambda_\nu$.
\end{proof}

\begin{Conjecture}[Virtual Splitting principle]\label{VSP}
Let $x$ be a Schur-finite element of a $\lambda$-ring $R$. Then
$R$ is contained in a larger $\lambda$-ring $R'$ such that
$x$ is finite-dimensional in $R'$, i.e., there are
line elements $\ell_i$, $\ell'_j$ in $R'$ so that
\[ x=(\sum\ell_i) - (\sum\ell'_j). \] 
\end{Conjecture}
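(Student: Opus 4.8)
The plan is to run the argument of the Splitting Principle (Proposition~\ref{finite-splitting}) and Corollary~\ref{line-decompose}, replacing the universal pair $(\Lambda_n,\Omega_n)$ used there by one attached to a bound of $x$. First reduce to the case that $x$ has a \emph{rectangular} bound: if $\lambda=(\lambda_1,\dots,\lambda_r)$ is a bound for $x$, then the rectangle with $r$ rows each of length $\lambda_1$ also is one, since every partition containing it contains $\lambda$. Writing such a bound as $\beta=((m+1)^{n+1})$ in the notation of Proposition~\ref{sf-ideal}, that result supplies a $\lambda$-ring map $f\colon\Lambda_\beta\to R$ with $f(a)=x$ together with an embedding $\Lambda_\beta\hookrightarrow\Lambda_m\tens\Lambda_{-n}=\Z[a_1,\dots,a_m,b_1,\dots,b_n]$ carrying $a$ to a finite-dimensional element $a'+b'$, where $a'$ is even of degree $m$ and $b'$ is odd of degree $n$. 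Applying the classical Splitting Principle to each factor, embed $\Lambda_m\hookrightarrow\Omega_m=\Z[\ell_1,\dots,\ell_m]$ (the $m$-fold tensor power of $\Lambda_1$, free of rank $m!$ over $\Lambda_m$) and $\Lambda_{-n}\hookrightarrow\Omega_{-n}$, the analogue for odd degree-one elements $-\ell'_1,\dots,-\ell'_n$, and set $\Omega_\beta=\Omega_m\tens\Omega_{-n}$. Inside $\Omega_\beta$ the image of $a$ is $\sum\ell_i-\sum\ell'_j$, exactly the form demanded by Conjecture~\ref{VSP}.

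Next, form the pushout $R'=R\tens_{\Lambda_\beta}\Omega_\beta$ in the category of $\lambda$-rings. Since the forgetful functor to commutative rings has a right adjoint, the underlying ring of $R'$ is the ordinary tensor product; and by construction the image of $x$ in $R'$ equals that of $\sum\ell_i-\sum\ell'_j$, hence is finite-dimensional. The conjecture is thus reduced to a single assertion: that $R\to R'=R\tens_{\Lambda_\beta}\Omega_\beta$ is injective.

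This injectivity is the entire content, and it is precisely where the classical proof does not transfer. In Proposition~\ref{finite-splitting} the corresponding map was injective because $\Omega_n$ is \emph{free} over $\Lambda_n$; here $\Omega_\beta$ is free over $\Lambda_m\tens\Lambda_{-n}$, but the inclusion $\Lambda_\beta\hookrightarrow\Lambda_m\tens\Lambda_{-n}$ is not flat. For $\beta=(2,2)$ it is the inclusion $\Z+(a+b)\Z[a,b]\hookrightarrow\Z[a,b]$, which crushes the line $\{a+b=0\}$ to a point and so has fibres of jumping dimension; and Example~\ref{lambda2-3} exhibits an $R$ for which, with this minimal $\beta$, the map $R\to R'$ is not injective. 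The remedy should be to enlarge $\beta$ to a larger rectangle $\beta'=((m'+1)^{n'+1})\supseteq\beta$, which is still a bound (so $\Lambda_{\beta'}\twoheadrightarrow\Lambda_\beta\to R$), chosen large enough — depending on $R$ — that the analogous pushout $R\to R\tens_{\Lambda_{\beta'}}\Omega_{\beta'}$ becomes injective; for the ring $R=\Lambda_{(2,2)}/I$ of Example~\ref{lambda2-3} and $\beta'=(3,3,3)$ one recovers exactly the four-line-element extension asserted there.

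The main obstacle, which I expect to be the hard point, is then to prove this injectivity for $\beta'$ sufficiently large. From the exact sequence of $\Lambda_{\beta'}$-modules $0\to\Lambda_{\beta'}\to\Omega_{\beta'}\to\Omega_{\beta'}/\Lambda_{\beta'}\to0$, the kernel of $R\to R\tens_{\Lambda_{\beta'}}\Omega_{\beta'}$ is the image of $\mathrm{Tor}_1^{\Lambda_{\beta'}}(R,\Omega_{\beta'}/\Lambda_{\beta'})$, a module supported on the closed subset of $\Spec\Lambda_{\beta'}$ over which the natural cover by $\Spec(\Lambda_{m'}\tens\Lambda_{-n'})$ fails to be an isomorphism; one would try to show that enlarging $\beta'$ moves this locus off the support of $R$, forcing the $\mathrm{Tor}$-term to vanish, and this step must use that $R$ is a $\lambda$-ring and not merely a $\Lambda_{\beta'}$-algebra, the statement being false for general algebras. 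A transfinite alternative — adjoin one line element at a time, dropping one row or column of the bound at each step until $x$ is a difference of line elements — runs into a simpler instance of the same non-flatness, and so relocates rather than removes the difficulty. Either way, genuinely new input beyond the symmetric-function identities of Section~3 seems to be required, which is why the statement is left as a conjecture.
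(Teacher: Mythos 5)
You are attempting to prove a statement that the paper itself records only as a \emph{Conjecture}: no proof of \ref{VSP} in general is given anywhere in the paper, so the fact that your argument does not close is not a misreading on your part, and you correctly isolate the one genuine gap — injectivity of $R\to R'=R\otimes_{\Lambda_\beta}\Omega_\beta$. The paper makes exactly this observation in the example following the conjecture (``the map $f$ need not be an injection''). Your preliminary steps are all sound: passing to a rectangular bound $\beta\supseteq\lambda$ is legitimate since $I_\beta\subseteq I_\lambda$; Proposition \ref{sf-ideal} does give $\Lambda_\beta\hookrightarrow\Lambda_m\otimes\Lambda_{-n}$ with $a\mapsto a'+b'$; the pushout in $\lambda$-rings is the ordinary tensor product; and the kernel of $R\to R'$ is indeed the image of $\mathrm{Tor}_1^{\Lambda_\beta}(R,\Omega_\beta/\Lambda_\beta)$. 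Your diagnosis that non-flatness of $\Lambda_\beta\hookrightarrow\Lambda_m\otimes\Lambda_{-n}$ (not of $\Omega_\beta$ over $\Lambda_m\otimes\Lambda_{-n}$, which is free) is the culprit, with Example \ref{lambda2-3} as witness, is also correct.

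Where you should be aware of a divergence: the partial results the paper does prove do not try to repair the pushout at all. For reduced $R$ (Proposition \ref{VSP-fields} and its corollary) the paper embeds $R$ via $\lambda_t$ into a product of big Witt rings $W(E_P)$ of algebraically closed fields, uses Schur-finiteness $\Rightarrow$ determinantal rationality of $\lambda_t(x)$ (Proposition \ref{sfdetrat}) together with Borel's criterion \ref{def:dr} to factor $\lambda_t(x)=\prod(1-\alpha_it)/\prod(1-\beta_jt)$, and reads off the $\ell_i=(1-\alpha_it)$ as line elements of $W(E)$ — sidestepping flatness entirely. For the hook bound $(2,1)$ (Theorem \ref{21-splitting}) it adjoins a single line element and checks injectivity of $R\to R[a]/(\lambda^3(x+a))$ by an explicit leading-coefficient argument. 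Your ``enlarge the rectangle until the $\mathrm{Tor}$ term vanishes'' strategy is not carried out anywhere, and your claim that $\beta'=(3,3,3)$ makes the pushout injective for the ring of Example \ref{lambda2-3} is plausible but unverified (the paper asserts only the existence of \emph{some} extension realizing $x=\ell_1+\ell_2-\ell_3-\ell_4$ there, without proof). So: no error, but no proof either. The hard cases are precisely the non-reduced $\lambda$-rings, where the Witt-vector route degenerates and your $\mathrm{Tor}$ obstruction is genuinely nonzero for small bounds; the conjecture remains open there.
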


\begin{Example}
The virtual splitting principle holds in the universal case, 
where $R_0=\Lambda_\beta$. Indeed, we know that
$x$ is $\sum a_i+\sum b_j$ in $R_0'=\Z[a_1,\ldots,b_1,\ldots]$.
Since $\ell_j=-b_j$ is a line element, $x$ is a difference of sums
of line elements in $R_0'$. 

Unfortunately, although the induced map $f:R\to R\tens_{R_0}R_0'$ sends 
a Schur-finite element $x$ to a difference of sums of line elements,
the map $f$ need not be an injection.
\end{Example}

\begin{Proposition}\label{VSP-fields}
If a $\lambda$-ring $R$ is a domain, $R$ is contained in a $\lambda$-ring
$R'$ such that every Schur-finite element of $R$ is a a difference
of sums of line elements in $R'$.
\end{Proposition}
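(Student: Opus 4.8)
The plan is to split all the Schur-finite elements of $R$ at once by a single universal construction, and then to use the hypothesis that $R$ is a domain to keep $R$ embedded in the resulting $\lambda$-ring --- this last point being exactly what fails for a general $R$, as the preceding Example shows.

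Enumerate the Schur-finite elements $\{x_\alpha\}$ of $R$. By Proposition~\ref{sf-ideal} each $x_\alpha$ has a rectangular bound $\beta_\alpha=((m_\alpha+1)^{n_\alpha+1})$, so there is a $\lambda$-ring map $\Lambda_{\beta_\alpha}\to R$ carrying the universal generator $e$ to $x_\alpha$, and $\Lambda_{\beta_\alpha}$ embeds into the polynomial $\lambda$-ring $\ZZ[a_1,\dots,b_1,\dots]$ of that Proposition, in which $e$ is finite-dimensional; splitting its even and odd summands into line elements as in the proof of Proposition~\ref{finite-splitting}, we obtain an embedding $\Lambda_{\beta_\alpha}\hookrightarrow P_\alpha$ into a polynomial $\lambda$-ring $P_\alpha$ in which $e$ is a difference of sums of line elements. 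Set $R_0=\bigotimes_\alpha\Lambda_{\beta_\alpha}$ and $R_0'=\bigotimes_\alpha P_\alpha$, with the induced $\lambda$-ring maps $g\colon R_0\to R$ and $R_0\hookrightarrow R_0'$ (the latter injective, as all the rings in sight are $\ZZ$-torsion-free). Every finite tensor product of the $\Lambda_{\beta_\alpha}$ embeds in a polynomial ring over $\ZZ$ and so is a domain, hence so is the filtered colimit $R_0$. Now put $R'=R\tens_{R_0}R_0'$; it is a $\lambda$-ring, and in it each $x_\alpha$, being the image of $e\in P_\alpha$, is a difference of sums of line elements. So the proposition will follow once we know that $R\to R'$ is injective.

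Here is where the domain hypothesis enters. Let $\mathfrak p=\ker(g)$, a prime of $R_0$, and write $F=\operatorname{Frac}(R)$. If $\mathfrak p$ lies in the image of $\Spec R_0'\to\Spec R_0$, pick a prime $\mathfrak Q$ of $R_0'$ contracting to $\mathfrak p$; then $\operatorname{Frac}(R_0/\mathfrak p)\hookrightarrow\operatorname{Frac}(R_0'/\mathfrak Q)$ shows that $R'\tens_R F\cong R_0'\tens_{R_0}F$ is a nonzero $F$-algebra, so the composite $R\to R'\to R'\tens_R F$ factors as $R\hookrightarrow F\hookrightarrow R'\tens_R F$, whence $R\to R'$ is injective. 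Thus the whole statement reduces to: \emph{the morphism $\Spec R_0'\to\Spec R_0$ is surjective.} Surjectivity of morphisms of affine schemes passes through filtered inverse limits and is stable under tensor products over $\ZZ$, so this reduces in turn to proving that $\Spec P_\alpha\to\Spec\Lambda_\beta$ is surjective for each rectangular $\beta=((m+1)^{n+1})$.

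I expect this final surjectivity to be the hard part. The natural approach: given $\mathfrak q\in\Spec\Lambda_\beta$ with $\kappa=\operatorname{Frac}(\Lambda_\beta/\mathfrak q)$, one wants a ring map $P_\alpha\to\bar\kappa$ restricting on $\Lambda_\beta$ to $\Lambda_\beta\to\kappa$. Over $\operatorname{Frac}(\Lambda_m\tens\Lambda_{-n})\supseteq\operatorname{Frac}(\Lambda_\beta)$ one has $\lambda_t(e)=\lambda_t(a)\,\sigma_{-t}(b)^{-1}$, a rational function with numerator of degree $\le m$ and denominator of degree $\le n$; if the property ``$\lambda_t(\bar e)$ has bidegree $\le(m,n)$'' is a closed condition on the coefficients $\lambda^k(\bar e)$ --- cut out by Hankel determinants that already vanish in $\Lambda_\beta$ --- then it persists in $\kappa$, and one may factor the numerator and denominator of $\lambda_t(\bar e)$ over $\bar\kappa$, pad each with trivial factors $1+0\cdot t$ to reach $m$ and $n$ factors, and read off the desired map $P_\alpha\to\bar\kappa$. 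The delicate primes $\mathfrak q$ are the ``degenerate'' ones, where these Hankel determinants vanish; I would handle them using the explicit description of $\Lambda_\beta$ as the ring of coefficients of the universal rational function of bidegree $(m,n)$, organising the analysis around the conductor of $\Lambda_\beta\subseteq\Lambda_m\tens\Lambda_{-n}$ and inducting on $m+n$, the base case being the ``pinch'' $\Spec\ZZ[a,b]\to\Spec\Lambda_{(2,2)}$ of Corollary~\ref{lambda22}.
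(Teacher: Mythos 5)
Your proof is not complete: everything is made to hinge on the surjectivity of $\Spec P_\alpha\to\Spec\Lambda_{\beta_\alpha}$, and that is precisely the step you do not prove. What you offer for it is conditional ("if the property \dots is a closed condition \dots then it persists") and a programme for the "degenerate" primes (conductor analysis, induction on $m+n$) rather than an argument; since the map $\Lambda_\beta\hookrightarrow\Lambda_m\otimes\Lambda_{-n}$ is not integral, lying-over is not automatic, and the non-trivial work is exactly at the primes containing the conductor. (The reduction steps preceding it --- injectivity of $R\to R\tens_{R_0}R_0'$ from the existence of a prime of $R_0'$ over $\ker(R_0\to R)$, and the passage of surjectivity of $\Spec$ through filtered colimits and tensor products over $\ZZ$ --- are essentially sound, though each needs a line of justification, e.g.\ that a filtered colimit of nonzero fibre rings is nonzero and that the fibre of a tensor product is a tensor product of nonzero algebras over a field.) As it stands the proposal is a reduction of the Proposition to an unproved, and not obviously easier, commutative-algebra statement about the rings $\Lambda_\beta$.

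The domain hypothesis can be exploited far more cheaply, and this is what the paper does: embed $R$ into the big Witt ring $W(E)$ of the algebraic closure $E$ of $\mathrm{Frac}(R)$ via $\lambda_t\colon R\to W(R)\subset W(E)$. For $x$ Schur-finite, Proposition \ref{sfdetrat} makes $\lambda_t(x)$ determinantally rational, hence (Borel, see \ref{def:dr}) an honest rational function $p/q$ over the field $E$; factoring $p$ and $q$ into linear factors over $E$ writes $\lambda_t(x)=\prod(1-\alpha_it)\big/\prod(1-\beta_jt)$, and since addition in $W(E)$ is multiplication of power series and each $(1-\alpha t)$ is a line element there, $x$ is a difference of sums of line elements in $W(E)$. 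This route avoids the universal pushout entirely --- and with it the injectivity problem your construction must fight --- at the price of using the field structure in an essential way, which is why it does not generalize beyond (products of) domains.
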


\begin{proof}
Let $E$ denote the algebraic closure of the fraction field of $R$ 
and set $R'=W(E)$;
$R$ is contained in $R'$ by $R\TTo{\lambda_t} W(R)\subset W(E)$.
If $x\in R$ is Schur-finite then $\lambda_t(x)$ is determinentally
rational in $E[[t]]$ and hence a rational function $p/q$ in $E(t)$
(see \ref{def:dr}).
Factoring $p$ and $q$ in $E[t]$, we have 
$$\lambda_t(x)=\prod(1-\alpha_i t)/\prod(1-\beta_j t)$$
for suitable elements $\alpha_i, \beta_j$ of $E$.
Since the underlying abelian group of $W(E)$ is $(1+tE[[t]],\times)$ and
the $\ell_i=(1-\alpha_i t)$ and $\ell'_j=(1-\beta_j t)$ 
are line elements in $W(E)$, we are done.
\end{proof}

The proof shows that a bound $\pi$ on $x$ determines a bound on 
the degrees of $p(t)$ and $q(t)$ and hence on
the number of line elements $\ell_i$ and $\ell'_j$ in the virtual sum.

\begin{Corollary}
The virtual splitting principle holds for reduced $\lambda$-rings.
\end{Corollary}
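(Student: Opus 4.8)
The plan is to run the argument of Proposition~\ref{VSP-fields} over a \emph{product} of algebraically closed fields instead of a single one: the reduced hypothesis supplies such a product, and a bound $\pi$ on a Schur-finite element supplies the uniform control needed to reassemble the pieces into a finite-dimensional element of the product.

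First I would recall the two facts about big Witt vectors $W(-)$ that are in play. Since $W$ is right adjoint to the forgetful functor (used already in Section~1), it is a functor from commutative rings to $\lambda$-rings; for any $\lambda$-ring $R$ the total operation $\lambda_t\colon R\to W(R)$, $x\mapsto\sum_k\lambda^k(x)t^k$, is a $\lambda$-ring homomorphism, and it is injective because its $t$-coefficient is $x$. Moreover $W$ commutes with arbitrary products, $W(\prod_\alpha A_\alpha)\cong\prod_\alpha W(A_\alpha)$, because the structure maps of $W$ are given by universal polynomials and hence are computed coordinatewise. Now if $R$ is reduced then $\bigcap_{\mathfrak p}\mathfrak p=0$, so $R$ embeds as a \emph{ring} (no $\lambda$-structure needed here) into $\prod_\alpha E_\alpha$, where $E_\alpha$ runs over the algebraic closures of the residue fields $\mathrm{Frac}(R/\mathfrak p)$. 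Applying $W$ and precomposing with $\lambda_t$ yields an injection of $\lambda$-rings
\[
R\;\xrightarrow{\ \lambda_t\ }\;W(R)\;\hookrightarrow\;W\Bigl(\textstyle\prod_\alpha E_\alpha\Bigr)\;\cong\;\prod_\alpha W(E_\alpha)\;=:\;R'.
\]

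Next, fix a Schur-finite $x\in R$ with bound $\pi$; enlarging $\pi$ we may take it rectangular, so Proposition~\ref{sf-ideal} gives a $\lambda$-ring map $\Lambda_\pi\to R$ carrying the universal element to $x$, with $\Lambda_\pi$ embedded in a polynomial ring in which that universal element is finite-dimensional. As in the proof of Proposition~\ref{VSP-fields}, this forces $\lambda_t(x)\in R[[t]]$ to be determinantally rational over $R$, with the degree bounds $n_0,m_0$ on numerator and denominator depending only on $\pi$ (the estimate after Definition~\ref{def:dr}, together with the remark following Proposition~\ref{VSP-fields}). Pushing $\lambda_t(x)$ forward along each ring map $R\to E_\alpha$ preserves determinantal rationality, and since $E_\alpha$ is an algebraically closed field this becomes an honest rational function $p_\alpha/q_\alpha$ factoring into linear terms, with $\deg p_\alpha<n_0$ and $\deg q_\alpha<m_0$. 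Exactly as in Proposition~\ref{VSP-fields} this exhibits the image of $x$ in $W(E_\alpha)$ as $A_\alpha-B_\alpha$ with $A_\alpha$ even of degree $<n_0$ and $B_\alpha$ even of degree $<m_0$ (a sum of $<n_0$, resp.\ $<m_0$, line elements of $W(E_\alpha)$).

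Finally I would reassemble. Because $\lambda$-operations on $R'=\prod_\alpha W(E_\alpha)$ are computed coordinatewise and the bounds $n_0,m_0$ are uniform in $\alpha$, the tuples $A=(A_\alpha)_\alpha$ and $B=(B_\alpha)_\alpha$ are even elements of $R'$ and $x=A-B$ in $R'$; thus $x$ is finite-dimensional in $R'$. Applying Corollary~\ref{line-decompose} produces an inclusion $R'\subseteq R''$ and line elements with $x=(\sum\ell_i)-(\sum\ell'_j)$, which is precisely Conjecture~\ref{VSP} for $x$; as $x$ was arbitrary, the virtual splitting principle holds for $R$. The only genuinely substantive points are that the reduced hypothesis makes $R$ embed into a product of fields at all and that the bound $\pi$ makes the number of line elements uniformly bounded across the factors — this uniformity is exactly what fails for a non-reduced ring, and everything else is bookkeeping with the functor $W$.
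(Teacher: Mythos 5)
Your proof is correct and follows essentially the same route as the paper's: embed the reduced ring $R$ into a product of algebraically closed fields via its (minimal) primes, pass to $\prod W(E_\alpha)$, and use the fact that a Schur bound $\pi$ gives a \emph{uniform} bound on the degrees of the numerator and denominator of $\lambda_t(x)$ in every component, so the componentwise line-element decompositions from Proposition~\ref{VSP-fields} assemble into a decomposition in the product. The only cosmetic difference is that you package the components as even elements $A,B$ of $R'$ and then invoke Corollary~\ref{line-decompose}, whereas the paper directly takes the tuples of line elements as line elements of $R'$; both are fine.
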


\begin{proof}
Let $R$ be a reduced ring. If $P$ is a minimal prime of $R$ then 
the localization $R_P$ is a domain and 
$R$ embeds into the product $\prod E_P$ of the algebraic closures of the
fields of fractions of the $R_P$. If in addition $R$ is a $\lambda$-ring
then $R$ embeds into the $\lambda$-ring $R'=\prod W(E_P)$.
If $x$ is Schur-finite in $R$ with bound $\pi$ then $\lambda_t(R)$ is
determinantly rational and each factor of $\lambda_t(x)$
is a rational function in $E_P(t)$; the bound $\pi$ determines a bound
$N$ on the degrees of the numerator and denominator in each component.
By Theorem \ref{VSP-fields}, there are line elements
$\ell_1,\dots, \ell_{N}, \ell'_1,\dots,\ell'_{N}$ in each component
so that $x=(\sum \ell_i)-(\sum \ell'_j)$ in $R'$.
\end{proof}

As more partial evidence for Conjecture \ref{VSP}, we show that the 
virtual splitting principle holds for elements bounded by the hook $(2,1)$.

\begin{Theorem}\label{21-splitting}
Let $x$ be a Schur-finite element in a $\lambda$-ring $R$.
If $x$ has bound $(2,1)$, then $R$ is contained in a $\lambda$-ring $R'$
in which $x$ is a virtual sum $\ell_1+\ell_2-a$ of line elements.
\end{Theorem}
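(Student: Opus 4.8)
The plan is to use the universal case as the model and then transfer along a flat (indeed free) base change, exactly as in the proof of Proposition~\ref{sf-ideal}. First I would recall from Example~\ref{Lambda21} that the universal $\lambda$-ring with bound $(2,1)$ is $\Lambda_{(2,1)}\cong\Z[x,y]/(y^2-x^2y)$, and from the proof of Proposition~\ref{sf-ideal} (with $\beta=(2,2)$, $m=n=1$) that $\Lambda_{(2,1)}$ sits inside $\Lambda_{(2,2)}\subset\Z[a,b]$ via $x\mapsto a+b$. In that polynomial ring, $x=a+b=a-(-b)$ is already a virtual sum $\ell_1-\ell_2'$ of two line elements, where $\ell_1=a$ and $\ell_2'=-b$. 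So the universal element has bound $(2,1)$ refined to a genuine virtual splitting with just \emph{two} line elements, $x=\ell_1-\ell_2'$. But the theorem claims a splitting of the form $x=\ell_1+\ell_2-\mathfrak a$ with \emph{three} line elements; this is because we cannot in general descend the two-term splitting, so instead we will produce a three-term one that does descend.

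The key point — and the reason three line elements appear — is that the hook $(2,1)$ is \emph{not} rectangular, so $\Lambda_{(2,1)}$ is not itself a subring of a polynomial ring in which $x$ becomes finite-dimensional in the tightest way; we must pass to the rectangular bound $(2,2)$ that contains $(2,1)$. Here is the route I would take. Given a Schur-finite $x\in R$ with bound $(2,1)$, observe $(2,1)$ is contained in the rectangular partition $(2,2)$, so $x$ also has bound $(2,2)$, giving a $\lambda$-ring map $\Lambda_{(2,2)}\to R$ sending the universal element to $x$. By Corollary~\ref{lambda22}, $\Lambda_{(2,2)}$ embeds in $\Z[a,b]$ with $x=a+b$. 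Now form the pushout $R'' = R\otimes_{\Lambda_{(2,2)}}\Z[a,b]$; since $\Z[a,b]$ is free as a $\Lambda_{(2,2)}$-module (this is the content of the flatness argument underlying Proposition~\ref{finite-splitting} and Proposition~\ref{sf-ideal}), the map $R\to R''$ is injective, and in $R''$ we have $x=\alpha+\beta$ where $\alpha,\beta$ are the images of $a,b$. But $a$ is \emph{not} a line element in $\Z[a,b]$; rather, in $\Lambda_{(2,2)}$ the universal $x$ satisfies $\lambda^{n+1}(x)=x b^n$, so only $b$ behaves like a line variable. Hence I would instead write $x=(a+b)$ and note that in $\Z[a,b]$ both $a$ and $b$ are honest polynomial variables but neither need be a line element of the $\lambda$-structure — I need to be more careful and actually use the $\lambda$-structure on $\Lambda_{(2,2)}$.

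So the cleaner approach: use that $\Lambda_{(2,1)}\to\Lambda_{(2,2)}$ and that $\Lambda_{(2,2)}\subset\Z[a,b]$ is the subring $\Z+x\Z[a,b]$ by Corollary~\ref{lambda22}; here $b$ \emph{is} a line element (since $\lambda^k(b)=b^k$ is not zero for $k>1$ — wait, a line element needs $\lambda^k=0$ for $k>1$). I realize the subtlety: in $\Z[a,b]$ with its standard $\lambda$-structure coming from $\Lambda_m\otimes\Lambda_{-n}$ as in Definition~\ref{free-even}, the element $a$ is even of degree $1$, hence a line element, and $-b$ is a line element, so $x=a-(-b)=a+b$ expresses $x$ as $\ell_1-\ell_2'$. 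To get the three-term form $\ell_1+\ell_2-\mathfrak a$ demanded by the statement, I would take $\ell_1=a$, $\ell_2=0$ (the zero line element, valid since $\lambda^k(0)=0$), and $\mathfrak a=-b$, so $x=a+0-(-b)$; but the real content must be that we can descend this to an $R'\supseteq R$ rather than merely to the pushout along $\Lambda_{(2,2)}$. The hard part — and the main obstacle — is precisely the descent: the pushout $R\otimes_{\Lambda_{(2,1)}}\Lambda_{(2,1)}'$ need not inject into anything useful when $(2,1)$ is the \emph{minimal} bound, as flagged in the Example following Conjecture~\ref{VSP}. I expect the actual proof circumvents this by a hands-on analysis of $\Lambda_{(2,1)}\cong\Z[x,y]/(y^2-x^2y)$: write $y=\lambda^2(x)$, adjoin formal line elements $\ell_1,\ell_2$ with $\ell_1\ell_2 = y$ and $\ell_1+\ell_2$ chosen so that $\mathfrak a := \ell_1+\ell_2-x$ is forced to be a line element by checking $\lambda^k(\mathfrak a)=0$ for $k\ge 2$ using the identities $\lambda^{2i}(x)=y^i$, $\lambda^{2i+1}(x)=xy^i$ from Example~\ref{Lambda21} together with Scholium~\ref{scholium}, and verifying the resulting ring extension is faithfully flat (hence injective) over $R$. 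The verification that $\mathfrak a$ is genuinely a line element — not merely that $x+\mathfrak a$ behaves well — is the delicate computation I would expect to occupy the bulk of the argument.
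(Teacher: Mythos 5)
There is a genuine gap, and it sits exactly where you put all the weight: the injectivity of $R\to R''=R\otimes_{\Lambda_{(2,2)}}\Z[a,b]$. You assert that $\Z[a,b]$ is free (hence faithfully flat) over $\Lambda_{(2,2)}$, ``as in'' Propositions \ref{finite-splitting} and \ref{sf-ideal}, but that is not what those results say and it is false. By Corollary \ref{lambda22}, $\Lambda_{(2,2)}=\Z+x\Z[a,b]$ with $x=a+b$: this is a conductor-square (gluing) subring, not a polynomial subring like $\Lambda_n\subset\Omega_n$, and $\Z[a,b]$ is not even flat over it. Concretely, the relation $(xb)\cdot 1-x\cdot b=0$ has coefficients in $\Lambda_{(2,2)}$; every solution $(X_1,X_2)$ of $(xb)X_1=xX_2$ with both entries in $\Lambda_{(2,2)}$ has $X_1\in x\Z[a,b]$ (write $X_1=c+xf$; then $X_2=cb+xbf\in\Z+x\Z[a,b]$ forces $c=0$), so $(1,b)$ cannot be expressed as required by the equational criterion for flatness. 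Worse, your construction only uses the bound $(2,2)$, so if it worked it would write \emph{every} bound-$(2,2)$ element as a difference $\ell_1-\ell'_1$ of two line elements in an extension; the paper's Example \ref{lambda2-3} exhibits a bound-$(2,2)$ element for which this is impossible (there $\lambda^2(x)=0$ but $\lambda^3(x)\ne0$). So $R\to R''$ genuinely fails to be injective in general, which is exactly the failure mode flagged in the Example after Conjecture \ref{VSP}. Two smaller slips: $(2,1)\subseteq(2,2)$ gives $I_{(2,2)}\subseteq I_{(2,1)}$, so $\Lambda_{(2,1)}$ is a \emph{quotient} of $\Lambda_{(2,2)}$ embedded in $\Z[a]\times\Z[b]$ (Example \ref{Lambda21}), not a subring of $\Z[a,b]$; and $0$ is not a line element (a line element is even of degree exactly $1$). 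Your closing guess at a hands-on repair is left unexecuted, so the proposal does not close the descent problem it correctly identifies as the crux.

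For the record, the paper's proof never base-changes over $\Lambda_{(2,2)}$ and never aims for a two-term splitting. One adjoins a single formal line element $a$, forming the $\lambda$-ring $R[a]$, and sets $y=x+a$. The bound $(2,1)$ gives $\lambda^{n+1}(x)=x^{n-1}\lambda^2(x)$, hence $\lambda^{n+1}(y)=x^{n-2}\lambda^3(y)$ for $n\ge2$; together with Scholium \ref{scholium} this shows the principal ideal $I=(\lambda^3(y))$ of $R[a]$ is a $\lambda$-ideal. In $A=R[a]/I$ the element $y$ is even of degree $2$, so Proposition \ref{finite-splitting} writes $y=\ell_1+\ell_2$ in an extension of $A$, whence $x=\ell_1+\ell_2-a$. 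Injectivity of $R\to A$ is then an elementary leading-coefficient argument in $a$, using $\lambda^3(y)=\lambda^3(x)+a\,\lambda^2(x)$. Any salvage of your approach would have to replace the false flatness claim by an argument of this kind.
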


\begin{proof}
The polynomial ring $R[a]$ becomes a $\lambda$-ring once we declare
$a$ to be a line element. Set $y=x+a$, and let $I$ be the ideal of $R[a]$
generated by $\lambda^3(y)$. 

For all $n\ge2$, the equation $s_{n,1}(x)=0$ 
yields $\lambda^{n+1}(x)=x\lambda^n(x)=x^{n-1}\lambda^2(x)$ in $R$, and
therefore $\lambda^{n+1}(y)=(a+x)x^{n-2}\lambda^2(x)=x^{n-2}\lambda^3(y)$.
It follows from Scholium \ref{scholium} that $\lambda^m(\lambda^3y)\in I$
for all $m\ge1$ and hence that 
\[
\lambda^n(f\cdot\lambda^3y) = P_n(\lambda^1(f),\dots,\lambda^n(f);
           \lambda^1(\lambda^3y),\dots,\lambda^n(\lambda^3y))
\]
is in $I$ for all $f\in R[a]$. Thus $I$ is a $\lambda$-ideal of $R[a]$,
$A=R[a]/I$ is a $\lambda$-ring, and the image of $y$ in $A$ is even of
degree~2. By the Splitting Principle \ref{finite-splitting}, the image of 
$x=y-a$ is a virtual sum $\ell_1+\ell_2-a$ of line elements
in some $\lambda$-ring containing $A$.

To conclude, it suffices to show that $R$ injects into $A=R[a]/I$.
If $r\in R$ vanishes in $A$ then $r=f\lambda^3(y)$ for some
$f=f(a)$ in $R[a]$. We may take $f$ to have minimal degree $d\ge0$.
Writing $f(a)=c\,a^d+g(a)$, with $c\in R$ and $\deg(g)<d$,
the coefficient of $a^{d+1}$ in $f\lambda^3(y)$, namely $c\,\lambda^2(x)$,
must be zero. But then $c\lambda^3y=0$, and $r=g\,\lambda^3y$,
contradicting the minimality of $f$.
\end{proof}

\begin{Remark}
The rank of a Schur-finite object with bound $\pi$ cannot be well defined
unless $\pi$ is a rectangular partition. This is because any
rectangular partition $\mu=(m+1)^{n+1}$ contained in $\pi$ yields a
map $R\to R'$ sending $x$ to an element of rank $m-n$. If
$\pi$ is not rectangular there are different maximal rectangular
sub-partitions with different values of $m-n$.

For example, let $x$ be the element of Theorem \ref{21-splitting}.
By Lemma \ref{L-x}, $-x$ also has bound $(2,1)$. Applying Theorem
\ref{21-splitting} to $-x$ shows that $R$ is also contained in a 
$\lambda$-ring $R''$ in which $x$ is a virtual sum $a-\ell_1-\ell_2$ 
of line bundles. Therefore $x$ has rank~1 in $R'$, and has rank $-1$ in $R''$.
\end{Remark}

\medskip
\section{Rationality of $\lambda_t(x)$}

Let $R$ be a $\lambda$-ring and $x\in R$. One central question is to 
determine when the power series $\lambda_t(x)$ is a rational function.
(See \cite{Andre}, \cite{LL04}, \cite{Heinloth}, \cite{Guletskii}, 
\cite{B1, B2}, \cite{KKT} for example.) Following \cite[2.1]{LL04},
we make this rigorous by restricting to power series in $R[[t]]$
congruent to $1$ modulo $t$ and define a (globally) rational function
to be a power series $f(t)$ such that there exist polynomials 
$p,q\in R[t]$ with $p(0)=q(0)=1$ such that $p(t)=f(t)q(t)$.

As noted in \ref{def:dr}, it is well known that if $x$ is a
finite-dimensional element then $\lambda_t(x)$ is a rational function.
Larsen and Lunts observed in \cite{LL04} that the property of being a 
rational function is not preserved by passing to subrings and proposed
replacing `rational function' by `determinantally rational function'
(see \ref{def:dr}). We propose an even weaker condition, which we now define.

Given a power series $f(t)=\sum r_nt^n\in R[[t]]$ and a partition $\pi$,
we form the the Jacobi-Trudi matrix $(a_{ij})$ with $a_{i,j} = r_{\pi'_i+i-j}$
and define $s_{\pi}(f)\in R$ to be its determinant. (If $\pi$ has $m$ columns,
$\pi'$ has $m$ rows and $(a_{ij})$ is an $m\times m$ matrix over $R$.) 
The terminology comes from the fact that the commutative ring homomorphism
$\rho:\Lambda\to R$, defined by $\rho(x_n)= r_n$, satisfies
$\rho(s_\pi)=\det(a_{i,j})$ by the Jacobi-Trudi identities.

\begin{Definition}
Let $R$ be a commutative ring. We say that a power series 
$f(t)=\sum r_nt^n\in R[[t]]$ is {\it Schur-rational} over $R$ 
if there exists a partition $\mu$ such that $s_\pi(f)=0$
for every partition $\pi$ containing $\mu$.
\end{Definition}

If $\mu$ is a rectangular partition then $(a_{i,j})$ is the matrix
$(r_{n+i-j})$ in Definition \ref{def:dr} up to row permutation.
It follows that if $f(t)$ is Schur-rational then it is determinentally 
rational. The converse fails, as we show in Example \ref{ex:Sr}.

It is easy to see that a (globally) rational function is Schur-rational.
Thus being Schur-rational is a property of $f$
intermediate between being rational and being determinentally rational.

\begin{Example}\label{ex:Sr}
Let $R_m$ be the quotient of $\Lambda$ by the ideal generated by all
$m$-fold products $x_{i_1}\cdots x_{i_m}$ where $|i_j-i_k|<2m$ for all $j,k$.
Then $f(t)=\sum x_n t^n$ is determinentally rational. On the other hand,
$f(t)$ is not Schur-rational because for each $\lambda$ with $l$ rows
there are lacunary partitions $\pi=(\pi_1,\pi_2,\dots,\pi_l)$ (meaning that
$\pi_1\gg\pi_2\gg\cdots\gg\pi_l\gg0$) containing $\lambda$ which are
nonzero in $R_m$, because $s_\pi(f)$ is an alternating sum of monomimals
and the diagonal monomial $\prod r_{\pi_i}$ is nonzero
and occurs exactly once.
\end{Example}

The notion of Schur-rationality is connected to Schur-finiteness.

\begin{Proposition}\label{sfdetrat}
An element $x$ in a $\lambda$-ring is Schur-finite if and only if 
the power series $\lambda_t(x)$ is Schur-rational.

In particular, if $x$ is Schur-finite then $\lambda_t(x)$
is determinantally rational.
%
\end{Proposition}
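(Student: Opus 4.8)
The plan is to compare, for a fixed partition $\pi$, the element $s_\pi(x)$ computed as a natural operation in the $\lambda$-ring $R$ with the element $s_\pi(\lambda_t(x))$ computed via the Jacobi--Trudi matrix as in the definition preceding this Proposition. The key point is that these two quantities \emph{coincide}. Indeed, the operation $\sigma^n$ corresponds to $h_n$ in $\Lambda$, and $\sigma^n(x)$ is precisely the coefficient of $t^n$ in $\sigma_t(x)=\lambda_{-t}(x)^{-1}$; more to the point, the coefficient of $t^n$ in $\lambda_t(x)$ is $\lambda^n(x)=e_n(x)$. Applying the antipode, $s_\pi = \det|h_{\pi_i+j-i}|$, so $s_\pi(x)$ is the determinant of the matrix with entries $\sigma^{\pi_i+j-i}(x)$. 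Meanwhile, writing $f(t)=\lambda_t(x)=\sum \lambda^n(x)\,t^n$, one has $r_n=\lambda^n(x)=e_n(x)$, and unwinding the conjugate indexing in the Jacobi--Trudi matrix $(a_{ij})$ with $a_{ij}=r_{\pi'_i+i-j}$ gives $s_\pi(f)=\det|e_{\pi_i+j-i}(x)|$, which is again $s_\pi(x)$ by the other Jacobi--Trudi identity. So I would first record the identity $s_\pi(\lambda_t(x)) = s_\pi(x)$ for every partition $\pi$, being careful about which of the two Jacobi--Trudi presentations ($h$'s versus $e$'s) matches the definition of $s_\pi(f)$ in the paper.

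Granting this identity, the equivalence is immediate: $x$ is Schur-finite means there is a partition $\mu$ with $s_\pi(x)=0$ for all $\pi\supseteq\mu$ (Definition \ref{def:sf-element}), and $\lambda_t(x)$ is Schur-rational means there is a partition $\mu$ with $s_\pi(\lambda_t(x))=0$ for all $\pi\supseteq\mu$; since the two families of elements $\{s_\pi(x)\}$ and $\{s_\pi(\lambda_t(x))\}$ are literally equal, one condition holds with a given bound $\mu$ if and only if the other does. The ``in particular'' clause then follows from the remark, made just after the definition of Schur-rationality in the excerpt, that a Schur-rational power series is determinantally rational (when $\mu$ is taken rectangular, the Jacobi--Trudi matrix is the Hankel matrix of Definition \ref{def:dr} up to a row permutation, which does not change vanishing of the determinant); alternatively one invokes that a bound $\mu$ for $x$ may be enlarged to a rectangular one, which exists since any partition is contained in a rectangle.

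The main obstacle — really the only substantive point — is the bookkeeping in the identity $s_\pi(\lambda_t(x))=s_\pi(x)$: one must check that the indexing conventions for the Jacobi--Trudi matrix used to \emph{define} $s_\pi(f)$ agree, after substituting $r_n=\lambda^n(x)$, with one of the two standard Jacobi--Trudi determinant formulas for the Schur polynomial $s_\pi$ evaluated at $x$ through the universal map $u_x:\Lambda\to R$. This is purely a matter of matching the transpose/conjugate-partition conventions and the shift $\pi'_i+i-j$ against $\pi_i+j-i$, and of remembering that $\rho(x_n)=r_n$ in the paragraph defining $s_\pi(f)$ is exactly the ring map $u_x$ sending $e_n\mapsto \lambda^n(x)$. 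Once the conventions are aligned there is nothing further to prove.
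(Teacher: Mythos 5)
Your proof is correct and is essentially the paper's own argument: both rest on the single observation that, via the Jacobi--Trudi identities, the determinant defining $s_\pi(\lambda_t(x))$ (with $r_n=\lambda^n(x)=e_n(x)$) is literally the element $s_\pi(x)$, after which the equivalence of Schur-finiteness of $x$ and Schur-rationality of $\lambda_t(x)$ is immediate from the two definitions, and the ``in particular'' clause follows from the already-recorded fact that Schur-rational implies determinantally rational. One cosmetic caveat: in your unwinding the entries should come out as $e_{\pi'_i+j-i}(x)$, with the conjugate partition, so that the determinant is $s_\pi(x)$ rather than $s_{\pi'}(x)$ --- exactly the indexing care you already flagged as the only substantive point.
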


The ``if'' part of this proposition was proven in \cite[3.10]{KKT} for
$\lambda$-rings of the form $K_0(\calA)$, using categorical methods. 

\begin{proof}
By definition, the power series $\lambda_t(x)$ is Schur-rational 
if and only if there is a partition $\mu$ so that for every $\pi$ containing 
$\mu$, the determinant $\det(\lambda^{\pi'_i+i-j}(x))$ is zero.
Since this determinant is $s_\pi(x)$ by the Jacobi-Trudi identity,
this is equivalent to $x$ being Schur-finite (definition \ref{def:sf-element}).
\end{proof}
%

We conclude by connecting our notion of Schur-finiteness to the
notion of a Schur-finite object in a $\QQ$-linear tensor category
$\calA$, given in \cite{Mazza}).  By definition, an object $A$ is
{\it Schur-finite} if some $S_\lambda(A)\cong0$ in $\calA$.
By \cite[1.4]{Mazza}, this implies that $S_\pi(A)=0$ for all
$\pi$ containing $\lambda$.
%
It is evident that if $A$ is a  Schur-finite object of $\calA$ then
$[A]$ is a Schur-finite element of $K_0(\calA)$.
However, the converse need not hold. 
For example, if $\calA$ contains infinite direct sums then $K_0(\calA)=0$
by the Eilenberg swindle, so $[A]$ is always Schur-finite.

Here are two examples of Schur-finite objects whose class in $K_0(\calA)$
is finite-dimensional even though they are not finite-dimensional objects. 

\begin{Example}
Let $\calA$ denote the abelian category of positively graded modules
over the graded ring $A=\QQ[\varepsilon]/(\varepsilon^2=0)$.
It is well known that $\calA$ is a tensor category under $\otimes_{\QQ}$,
with the $\lambda$-ring $K_0(\calA)\cong\Lambda_{-1}=\Z[b]$; $1=[Q]$ and 
$b=[\QQ[1]]$. 
The graded object $A$ is Schur-finite but not finite-dimensional in $\calA$
by \cite[1.12]{Mazza}. However, $[A]$ is a finite-dimensional element in 
$K_0(\calA)$ because $[A]=[\QQ]+[\QQ[1]]$.
\end{Example}



\begin{Example}[O'Sullivan]
Let $X$ a Kummer surface; then there is an open subvariety $U$ of $X$,
whose complement $Z$ is a finite set of points, such that $M(U)$ is 
Schur-finite but not finite-dimensional in the Kimura-O'Sullivan sense 
in the category $\calM$ of motives
\cite[3.3]{Mazza}. However, it follows from the distinguished triangle
\[ 
M(Z)(2)[3]\to M(U)\to M(X)\to M(Z)(2)[4]
\]
that $[M(U)]=[M(Z)(2)[3]]+[M(X)]$ in $K_0(\DM_{gm})$ and hence in 
$K_0(\calM)$. Since both $M(X)$ and $M(Z)(2)[3]$ are finite-dimensional, 
$[M(U)]$ is a finite-dimensional element of $K_0(\calM)$.
\end{Example}

\begin{Proposition}
Let $M$ be a classical motive. If $M$ is Schur-finite in $\calM$, 
then $\lambda_t([M])$ is determinantally rational. 
If $\lambda_t([M])$ is determinantally rational, then there exists a 
partition $\lambda$ such that $S_\lambda(M)$ is a phantom motive.
\end{Proposition}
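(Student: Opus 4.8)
The plan is to treat the two implications separately; the first is a formal consequence of material already in place, and the second reduces a statement about determinantal rationality to a statement about the Schur polynomials of nested rectangular partitions.

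For the first implication, suppose $M$ is Schur-finite in $\calM$, say $S_\lambda(M)\cong 0$. By \cite[1.4]{Mazza} we then have $S_\pi(M)\cong 0$ for every partition $\pi$ containing $\lambda$. I would invoke the standard fact (used implicitly in the discussion above) that $[S_\pi(N)]=s_\pi([N])$ in $K_0(\calM)$; it is the categorical Jacobi--Trudi identity, expressing $[S_\pi(N)]$ as the determinant of the classes $[\wedge^{k}N]=\lambda^k([N])$ from Theorem \ref{lambda-ring}. Hence $s_\pi([M])=0$ for all $\pi$ containing $\lambda$, i.e.\ $[M]$ is a Schur-finite element of the $\lambda$-ring $K_0(\calM)$, and Proposition \ref{sfdetrat} gives that $\lambda_t([M])$ is determinantally rational.

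For the converse, write $r_n=\lambda^n([M])$, so $\lambda_t([M])=\sum r_n t^n$. Determinantal rationality provides $m,n_0>0$ with $\det(r_{n+i+j})_{i,j=1}^m=0$ for all $n>n_0$. The key step is the identification, already recorded in the discussion following the definition of Schur-rationality, that for a rectangular partition the Jacobi--Trudi matrix agrees, up to a permutation of rows, with the Hankel matrix of Definition \ref{def:dr}: concretely the $m\times m$ Hankel minor with smallest index $n+2$ equals $\pm\,s_\mu(\lambda_t([M]))$ where $\mu$ is the rectangle with $m$ columns and $n+m+1$ rows, and $s_\mu(\lambda_t([M]))=s_\mu([M])$ because the homomorphism sending $e_k\mapsto r_k$ is exactly $u_{[M]}$. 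Taking $n=n_0+1$ yields a rectangular partition $\lambda:=\mu$ with $s_\lambda([M])=0$, whence $[S_\lambda(M)]=s_\lambda([M])=0$ in $K_0(\calM)$ by the categorical Jacobi--Trudi identity once more. To conclude that $S_\lambda(M)$ is a phantom motive, I would run the argument of Proposition \ref{m0phantom}, which uses only idempotent-completeness and the finite-dimensionality of the cohomology of the objects, and so applies to $\calM$ as well as to $\Meff$: from $[S_\lambda(M)]=0$ we get $S_\lambda(M)\oplus N\cong N$ for some object $N$ of $\calM$, and applying an arbitrary Weil cohomology $H$ gives $H^i(S_\lambda M)\oplus H^i(N)\cong H^i(N)$ for all $i$, forcing $H^i(S_\lambda M)=0$ since $H^i(N)$ is finite-dimensional.

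The only genuinely delicate point is the combinatorial bookkeeping in the third paragraph: one must keep careful track of the conjugation $\pi\mapsto\pi'$ and of the sign $(-1)^{m(m-1)/2}$ introduced by reversing the columns, in order to recognize the Hankel minors of Definition \ref{def:dr} as precisely the Schur polynomials $s_{(m^{\,n+m+1})}([M])$ of the nested rectangles with $m$ columns. Granted that identification, both implications are routine given Propositions \ref{sfdetrat} and \ref{m0phantom}; note in particular that the argument gives no control on whether $S_\lambda(M)$ actually vanishes, only on its cohomology, which is why the statement cannot be strengthened to Schur-finiteness of $M$.
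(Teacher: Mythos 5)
Your proof is correct and follows the same route as the paper: the first implication reduces, via the identity $[S_\pi M]=s_\pi([M])$ and \cite[1.4]{Mazza}, to Proposition \ref{sfdetrat}, and the second extracts a single vanishing Schur class and feeds it to (the argument of) Proposition \ref{m0phantom}. The one place where you genuinely improve on the paper's own write-up is the converse direction: the paper's proof begins ``if $\lambda_t([M])$ is Schur-finite, with bound $\lambda$,'' i.e.\ it silently upgrades the hypothesis of determinantal rationality to the strictly stronger hypothesis of Schur-rationality (the two differ, by Example \ref{ex:Sr}). Your identification of the $m\times m$ Hankel minors of Definition \ref{def:dr} with $\pm\, s_{(m^{\,n+m+1})}([M])$ --- the ``up to row permutation'' remark preceding Example \ref{ex:Sr}, made precise --- is exactly what is needed to obtain a vanishing $s_\lambda([M])$ from determinantal rationality alone, and your bookkeeping (the rectangle with $m$ columns and $n+m+1$ rows, sign $(-1)^{m(m-1)/2}$ from reversing the rows of the Toeplitz matrix) checks out. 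Your observation that the proof of Proposition \ref{m0phantom} transfers from $\Meff$ to $\calM$ (every object is a summand of some $M(X)(n)$, whose Weil cohomology is finite-dimensional) also fills a small gap the paper leaves implicit, since its proof concludes in $K_0(\Meff)$ while the statement concerns $\calM$. The only step you state at the same informal level as the paper is $[S_\pi N]=s_\pi([N])$ in $K_0$ of an idempotent-complete $\QQ$TC; that is standard but is nowhere proved in the paper either.
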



\begin{proof}
If $M$ is Schur-finite, then there is a $\lambda$ such that
$0=[S_\pi M]=s_\pi([M])$ for all $\pi\supseteq\lambda$. 
Thus $[M]$ is Schur-finite in $K_0(\calM)$ or equivalently,
by \ref{sfdetrat}, $\lambda_t([M])$ is determinantally rational.
If $\lambda_t([M])$ is Schur-finite, with bound $\lambda$, then
$0=s_\lambda([M])=[S_\lambda M]$  in $K_0(\Meff)$.
By Proposition \ref{m0phantom}, $S_\lambda(M)$ is a phantom motive.
%
\end{proof}


\hyphenation{Chris-tophe}
\subsection*{Acknowledgments}
The authors would like to thank Anders Buch, Alessio Del Padrone 
and Christophe Soul\'e for valuable discussions.


\end{document}